\title{Curve complexes and Garside groups}
\author{Matthieu Calvez}
\address{Matthieu Calvez, Departamento de matem\'{a}tica y ciencia de la computaci\'{o}n , Facultad de Ciencia, Universidad de Santiago de Chile, Av. Libertador B. O'Higgins 3363, Santiago, Chile }
\email{calvez.matthieu@gmail.com}
\author{Bert Wiest}
\address{Bert Wiest, UFR Math\'ematiques, Universit\'e de Rennes 1, 35042 Rennes Cedex, France}
\email{bertold.wiest@univ-rennes1.fr}
\newtheorem{theorem}{Theorem}[section]
\newtheorem{lemma}[theorem]{Lemma} 
\newtheorem{proposition}[theorem]{Proposition}
\newtheorem{conjecture}[theorem]{Conjecture} 
\theoremstyle{definition}
\newtheorem{definition}[theorem]{Definition}    
\newtheorem{remark}[theorem]{Remark}
\newtheorem{openproblems}[theorem]{Open Problems}\newtheorem{example}[theorem]{Example}
\newtheorem{notation}[theorem]{Notation}
\newtheorem{observation}[theorem]{Observation}
\def\Z{{\mathbb Z}}
\def\co{\colon \thinspace}
\def\MMmodel{\mathcal{CC}\hat{\phantom{I}}}
\renewcommand{\phi}{\varphi}
\begin{document}

\begin{abstract}
We present a simple construction which associates to every Garside group a metric space, called the additional length complex, on which the group acts. 
These spaces share important features with curve complexes: they are $\delta$-hyperbolic, infinite, and typically locally infinite graphs. 
We conjecture that, apart from obvious counterexamples, additional length complexes have always infinite diameter. We prove this conjecture for the classical example of braid groups $(B_n,B_n^{+},\Delta)$; moreover, in this framework, reducible and periodic braids act elliptically, and at least some pseudo-Anosov braids act loxodromically. We conjecture that for $B_n$, the additional length complex is actually quasi-isometric to the curve complex of the $n$~times punctured disk.
\end{abstract}

\maketitle


\section{Motivation}

Let us consider the braid group~$B_n$ acting on the left on the curve complex of the $n$~times punctured disk, which we denote $\mathcal{CC}$. This complex is equipped with a base point~$c_0$, which we take to be a round curve in the disk. There is an obvious map
$$
B_n \longrightarrow \mathcal{CC}, \ x\mapsto x.c_0
$$

Now, consider the classical Garside structure of the braid group: permutation braids (or simple braids) are chosen as a preferred set of generators.
For any element $x$~of~$B_n$, the Garside \emph{mixed normal form} (as defined in~\cite{Thurston}) gives rise to a path in the Cayley graph, which is actually a geodesic~\cite{Charney}. We shall look at the image of this geodesic in $\mathcal{CC}$ -- thus, if $x$~has Garside mixed normal form $x=x_1\cdot\ldots\cdot x_l$, we consider the path $c_0$, $x_1.c_0$, $x_1x_2.c_0, \ \ldots\ $, $x_1\cdot\ldots\cdot x_l.c_0$ in the curve complex. Not much is known about this family of paths. For instance, it is not known whether it forms a uniform family of unparametrized quasi-geodesics. We conjecture that this is true, but this is by no means obvious: 
it is definitely \emph{not} true that any quasi-geodesic in $B_n$ projects to an unparametrized quasi-geodesic in $\mathcal{CC}$~\cite{SchleimerWiest}.

Even if we suppose that this first conjecture is true, i.e.\ normal form words in~$B_n$ project to quasi-geodesics in~$\mathcal{CC}$, another question remains. Indeed, let us look at a triangle in the Cayley graph of $B_n$ with vertices $1_{B_n}$ and positive braids $x, y\in B_n^+$, and with edges the mixed normal forms of $x$, of $y$, and of $x^{-1}y$. Projecting this triangle to the curve complex as above, and assuming the first conjecture to be true, we must obtain a $\delta$-thin triangle (since the curve complex is Gromov-hyperbolic~\cite{MM1,HPW,PrzSisto}). 
Now the obvious question is: how can we characterise, in terms of the three normal forms, the position of the quasi-center (the point which is close to all three edges)? 

There is an obvious conjecture how to answer this question: the quasi-center should be at $(x\wedge y).c_0$, where $x\wedge y$ denotes the greatest common divisor of $x$ and $y$, in the sense of Garside theory~\cite{B-G-GM}.  
Moreover, the edges from $c_0$ to~$x.c_0$ and from $c_0$ to~$y.c_0$ should stay close to each other (and to the path from $c_0$ to $(x\wedge y).c_0$) up to length \ $length(x\wedge y)$, and diverge afterwards.  This is our second conjecture.

The aim of the present paper is not to prove either of the above two conjectures, but rather to show what happens if we ``squash down'' the Cayley graph of $B_n$ in such a way that the second conjecture is forced to hold. It turns out that the resulting space, which we call the "additional length complex'' $\mathcal C_{AL}$, is $\delta$-hyperbolic, and shares many properties with the curve complex - we conjecture that the two are actually quasi-isometric. 

What is remarkable is that our construction of $\mathcal C_{AL}$ does not actually mention curves on a surface, and can be carried out analogously for any finite type Garside structure of a finite type Garside group. Garside groups are a family of groups with good combinatorial and algorithmic properties, containing e.g.\ Artin groups of spherical type~\cite{DehornoyParis,DehornoyGarside,GarsideFoundations}. For a particularly readable introduction which contains almost all prerequisites for this paper, see~\cite[Section 1.1]{B-G-GM}. For the rest of the paper, whenever we talk about a Garside group, we mean a Garside group of finite type equipped with a specific Garside structure.

Thus any Garside group $G$ acts on a metric space $\mathcal{C}_{AL}(G)$. The results of this paper can be summarized as follows.

{\bf Theorem } {\sl (A) For any Garside group~$G$, the space $\mathcal C_{AL}(G)$ is $60$-hyperbolic. Moreover, normal form words in~$G$ give rise to paths in $\mathcal C_{AL}$ which are at distance at most 39 from geodesics connecting the endpoints.}

{\sl (B) If $G$ is the braid group $B_n$, equipped with the classical Garside structure, then $\mathcal C_{AL}$ is of infinite diameter. Moreover, periodic and reducible braids act elliptically, and there exists a pseudo-Anosov braid which acts loxodromically.}
\bigskip

The plan of the paper is as follows: in Section~\ref{S:MainResult}, after recalling a few basic facts about Garside groups, we construct the additional length complex and prove that it is $\delta$-hyperbolic. In Section~\ref{S:B_n} we prove that the additional length complex associated with the classical Garside structure of the braid group is moved elliptically by the action of periodic and reducible braids, and that it is of infinite diameter.


\section{The main result}\label{S:MainResult}

In this section we shall prove that every Garside group~$G$ acts on a $\delta$-hyperbolic complex which we call the \emph{additional length complex} of $G$ 
(\emph{complexe des longueurs suppl\'ementaires} in French). 
The key ingredient for proving hyperbolicity is a ``Guessing Geodesics Lemma'' of Bowditch~\cite{Bowditch}. The definition of the complex rests on the technical notion of \emph{absorbable element}; we start with the definition and first properties of those.


\subsection{Absorbable elements}\label{SS:Absorbable}

In what follows, $(G,P,\Delta)$ is a Garside group 
with positive monoid~$P$, Garside element~$\Delta$, and $\tau$~denotes the inner automorphism of $G$ given by $\tau(x)=\Delta^{-1}x\Delta$.
In particular $P$ is \emph{atomic}, i.e. it is generated by the set of elements $a\in P$ such that the relation $a=uv$ with $u,v\in P$ implies $u=1$ or $v=1$; these elements are called \emph{atoms}.  
We assume the reader to be familiar with the prefix and suffix orders $\preccurlyeq$ and $\succcurlyeq$, the left/right-weightedness, the left/right gcd ($\wedge$/$\wedge^{\hspace{-0.7mm}\Lsh\hspace{0.7mm}}$) and lcm ($\vee$/$\vee^{\hspace{-0.7mm}\Lsh\hspace{0.7mm}}$) and the left/right normal form -- see e.g.\ \cite[Section 1.1]{B-G-GM}.
We recall that to each element $x$ of $G$ are associated three relative integers: its infimum 
$\inf(x)=\max\{r \in \mathbb Z, \Delta^r\preccurlyeq x\}$, its supremum $\sup(x)=\min\{s\in \mathbb Z, x\preccurlyeq \Delta^r\}$ and its canonical length $\ell(x)=\sup(x)-\inf(x)$. These are related to the left normal form as follows: if $x$ has left normal form 
$x=\Delta^p x_1\ldots x_r$, $p$, $p+r$ and $r$ are the infimum, the supremum and the canonical length of $x$, respectively.

We also recall the notion of rigidity: an element $x$ of $G$ with left normal form $x=\Delta^p x_1\ldots x_r$ is said to be \emph{rigid} if the pair $\left(x_r,\tau^{-p}(x_1)\right)$ is left-weighted; roughly speaking, this means that the left normal form written cyclically is left-weighted everywhere. 
Also, we recall that to each \emph{simple element} $s$ of~$G$ (that is, $s$~is a positive left and right divisor of~$\Delta$), is associated its \emph{right complement}: $\partial s=s^{-1}\Delta$, which is also a simple element. We extend this notion of right complement to each element $y$ of~$G$ with infimum~0: $\partial y=y^{-1}\Delta^{\sup(y)}$. In terms of the left normal form, if $y=y_1\ldots y_r$, then the normal form of $\partial y$ is $y'_r\ldots y'_1$ where $y'_i=\tau^{r-i}(\partial y_i)$, for $i=1,\ldots, r$.

The following formulae will be helpful and are well-known, see~\cite{Elrifai-Morton}.
For any $x,y\in G$, $p\in \mathbb Z$, 
$$\inf(\Delta^p x)=p+\inf(x),\ \ \sup(\Delta^px)=p+\sup(x).$$ 
$$\inf(xy)\geqslant \inf(x)+\inf(y),\ \ \ \sup(xy)\leqslant \sup(x)+\sup(y).$$
$$\inf(y^{-1})=-\sup(y),\ \ \ \sup(y^{-1})=-\inf(y).$$

\begin{definition}\label{D:Absorbable}
We say that an element $y$ of $G$ is \emph{absorbable} if two conditions are satisfied:
\begin{itemize}
\item $\inf(y)=0$ or $\sup(y)=0$,
\item there exists some $x\in G$ such that $$\begin{cases}
\inf(xy)=\inf(x)\ \text{ \ and} \\
 \sup(xy)=\sup(x).
\end{cases}$$
\end{itemize}
In this case we also say more precisely that $y$ is absorbable by $x$ or that $x$ \emph{absorbs} $y$. 
\end{definition} 

\begin{remark}
Definition~\ref{D:Absorbable} is very practical for our purposes, but it might not be the most suitable one for generalizing our techniques to other frameworks. We suggest another possible definition: say an element $y$ of $G$ is \emph{absorbable}$'$ if there exists an $x\in G$ 
such that for every initial segment $y^{(i)}=y_1\ldots y_i$ of the mixed normal form $y=y_1\ldots y_l$ we have: 
$$\inf(xy^{(i)})=\inf(x)\ \text{ \ and \ } \sup(xy^{(i)})=\sup(x).$$
(Note that we dropped the requirement that $\inf(y)=0$ or $\sup(y)=0$.)
This alternative definition is not quite equivalent to Definition~\ref{D:Absorbable}, but almost: every absorbable element is also absorbable$'$, and conversely, every absorbable$'$ element is the product of at most two absorbable elements, namely the positive and the negative parts of its mixed normal form.
\end{remark}

The following are immediate consequences of Definition~\ref{D:Absorbable}: 

\begin{lemma}\label{L:BasicAbsorb}
Let $y$ be an element of $G$. 
\begin{itemize}
\item[(i)] If $y$ is absorbable then there exist $k\in \mathbb N$ and $y_1,\ldots, y_k$ simple elements so that the left normal form of $y$ is $y_1\ldots y_k$ or $\Delta^{-k}y_1\ldots y_k$. 
\item[(ii)] $y$ is absorbable if and only $y^{-1}$ is absorbable. This is also equivalent to $\tau(y)$ and $\tau(y^{-1})$ being absorbable.
\end{itemize}
\end{lemma}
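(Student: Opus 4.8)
The plan is to derive both items directly from Definition~\ref{D:Absorbable} together with the standard infimum/supremum formulae recalled above. For item~(i), suppose $y$ is absorbable by some $x\in G$, so that $\inf(xy)=\inf(x)$ and $\sup(xy)=\sup(x)$. First I would treat the case $\inf(y)=0$. I claim $\sup(y)=\ell(y)$ equals the canonical length $k$ of~$y$, and that $\inf(y)=0$ forces the left normal form of $y$ to be a product $y_1\cdots y_k$ of simple elements with no $\Delta$-power in front; this is just the translation between left normal form and $(\inf,\sup)$ recalled before Definition~\ref{D:Absorbable}. So in this case there is nothing more to prove: the form $y_1\cdots y_k$ already has the required shape. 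The content of~(i) is therefore really in the other case, $\sup(y)=0$: then $\inf(y)=-\ell(y)=-k$, and the left normal form of $y$ is $\Delta^{-k}y_1\cdots y_k$ for simple $y_i$ — again immediate from the normal-form/infimum dictionary. The only thing to check is that absorbability is what rules out the ``mixed'' case where $y$ has both a genuinely negative infimum and a genuinely positive supremum; but that is exactly excluded by the first bullet of Definition~\ref{D:Absorbable}, which demands $\inf(y)=0$ or $\sup(y)=0$. So item~(i) does not even use the existence of the absorbing element $x$; it is a pure restatement of the first condition of the definition in terms of normal forms. I would state it that way.

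For item~(ii), the strategy is to symmetrise. If $x$ absorbs $y$, I want to produce an element absorbing $y^{-1}$. The natural candidate is $z=xy$: then $zy^{-1}=x$, and I must check $\inf(zy^{-1})=\inf(z)$ and $\sup(zy^{-1})=\sup(z)$, i.e. $\inf(x)=\inf(xy)$ and $\sup(x)=\sup(xy)$ — which is precisely the hypothesis, read backwards. I also need the first bullet for $y^{-1}$: since $\inf(y^{-1})=-\sup(y)$ and $\sup(y^{-1})=-\inf(y)$, the condition ``$\inf(y)=0$ or $\sup(y)=0$'' is equivalent to ``$\inf(y^{-1})=0$ or $\sup(y^{-1})=0$'', so that transfers for free. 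Hence $y$ absorbable $\iff$ $y^{-1}$ absorbable. For the $\tau$ part, I would use that $\tau$ is an automorphism of $G$ preserving $P$ and $\Delta$, hence preserving $\inf$, $\sup$, and $\ell$; applying $\tau$ to all the equalities in Definition~\ref{D:Absorbable} shows $x$ absorbs $y$ $\iff$ $\tau(x)$ absorbs $\tau(y)$, and combining with the first equivalence handles $\tau(y^{-1})$ as well.

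Neither part has a genuine obstacle — the whole lemma is ``immediate from the definition'', as the text says — so the only care needed is bookkeeping: being explicit that $\tau$ commutes with $\inf$ and $\sup$ (because it fixes $\Delta$ and preserves $\preccurlyeq$), and being careful in~(i) to phrase the conclusion as a dichotomy matching the dichotomy ``$\inf(y)=0$ or $\sup(y)=0$'' in the definition, rather than claiming both forms can occur simultaneously. If anything could be called the ``main point'', it is recognising that in~(ii) the absorbing element for $y^{-1}$ is $xy$ and not $x$ itself.
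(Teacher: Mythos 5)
Your proposal is correct and follows essentially the same route as the paper: item~(i) is read off as a restatement of the condition $\inf(y)=0$ or $\sup(y)=0$ via the normal-form dictionary, and item~(ii) uses exactly the paper's key observation that $xy$ absorbs $y^{-1}$ when $x$ absorbs $y$, together with $\tau$ preserving $\inf$ and $\sup$. No gaps.
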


\begin{proof}
(i) This is just a rewriting of the condition that $\inf(y)=0$ or $\sup(y)=0$ from Definition~\ref{D:Absorbable}.

(ii) Because $\inf(y^{-1})=-\sup(y)$ and $\sup(y^{-1})=-\inf(y)$, the first condition for absorbability is  satisfied by both $y$ and $y^{-1}$ or none.   
Moreover, if $y$ is absorbable by $x$, then $\inf(xy)=\inf(x)=\inf((xy)y^{-1})$ and $\sup(xy)=\sup(x)=\sup((xy)y^{-1})$. This shows that $y^{-1}$ is also absorbable, by $xy$. For later reference, we make the additional observation that $x$ and $xy$, which  absorb $y$ and $y^{-1}$, respectively, have the same sup and the same inf.

For the rest of statement (ii), just note that $y$ is absorbable by $x$ if and only if $\tau(y)$ is absorbable by $\tau(x)$.
\end{proof}

We shall see in Example~\ref{I:sInvDelta}(5) that the complement~$\partial y$ of an absorbable element~$y$ is not necessarily absorbable.

The following observation indicates that being absorbable may be a fairly rare property. 
\begin{lemma}\label{L:subword}
Any positive subword of a positive absorbable element is absorbable.
That is, suppose that a positive absorbable element~$y$ of~$G$ can be written as a product of three positive elements $y=uvw$ (with possibly $u=1$ or $w=1$). Then $v$ is absorbable.
\end{lemma}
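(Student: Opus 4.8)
The plan is to exploit the monotonicity of $\inf$ and $\sup$ under right multiplication by a positive element, together with the existence of an absorber for the whole word~$y$. The one preliminary fact I would record is that for every $h\in G$ and every $g\in P$ one has $\inf(hg)\geqslant\inf(h)$ and $\sup(hg)\geqslant\sup(h)$: indeed $h\preccurlyeq hg$ since $h^{-1}(hg)=g\in P$, and whenever $a\preccurlyeq b$, transitivity of $\preccurlyeq$ applied to $\Delta^{\inf(a)}\preccurlyeq a\preccurlyeq b$ and to $a\preccurlyeq b\preccurlyeq\Delta^{\sup(b)}$ gives $\inf(a)\leqslant\inf(b)$ and $\sup(a)\leqslant\sup(b)$.

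Next I would check the first clause of Definition~\ref{D:Absorbable} for~$v$. Since $y$ is positive and absorbable, $\inf(y)=0$ (the alternative $\sup(y)=0$ forces $y=1$, a trivial case). As $v\in P$ we have $\inf(v)\geqslant0$; and if $\Delta\preccurlyeq v$, writing $v=\Delta v'$ with $v'\in P$ and using $u\Delta=\Delta\tau(u)$ together with $\tau(P)=P$, we get $y=uvw=\Delta\,\tau(u)\,v'\,w$ with $\tau(u),v',w\in P$, hence $\Delta\preccurlyeq y$, contradicting $\inf(y)=0$. Therefore $\inf(v)=0$, so the first condition holds.

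For the second condition, let $x$ be an element absorbing~$y$, so $\inf(xy)=\inf(x)$ and $\sup(xy)=\sup(x)$, and set $x'=xu$. Reading the chain of right multiplications by positive elements $xu\preccurlyeq xuv\preccurlyeq xuvw=xy$ through the preliminary fact gives
$$\inf(x)\leqslant\inf(xu)\leqslant\inf(xuv)\leqslant\inf(xy)=\inf(x),\qquad \sup(x)\leqslant\sup(xu)\leqslant\sup(xuv)\leqslant\sup(xy)=\sup(x),$$
so every inequality is an equality. In particular $\inf(x'v)=\inf(xuv)=\inf(xu)=\inf(x')$ and $\sup(x'v)=\sup(x')$, i.e.\ $v$ is absorbed by $x'=xu$; combined with $\inf(v)=0$ this shows $v$ is absorbable.

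I do not expect a genuine obstacle here: the collapsing-chain argument for the second condition is completely formal. The only point that is not automatic is the first clause of the definition — that $v$ itself has infimum or supremum zero — which is precisely why the short $\Delta\preccurlyeq v\Rightarrow\Delta\preccurlyeq y$ computation of the middle paragraph is needed; it uses nothing beyond $u\Delta=\Delta\tau(u)$ and the invariance of~$P$ under~$\tau$.
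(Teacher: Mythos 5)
Your proof is correct and follows essentially the same route as the paper's: the chain $\inf(x)\leqslant\inf(xu)\leqslant\inf(xuv)\leqslant\inf(xuvw)=\inf(x)$ (and its $\sup$ analogue) is exactly the paper's argument, showing that $xu$ absorbs $v$. The only difference is that you spell out why $\inf(v)=0$, which the paper simply asserts; your justification of that point is valid.
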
 

\begin{proof}
First notice that $\inf(v)=0$. Let $x$ be such that $\inf(xy)=\inf(x)$ and $\sup(xy)=\sup(x)$. Then we claim that $\inf((xu)v)=\inf(xu)$ and $\sup((xu)v)=\sup(xu)$, implying that $v$ is absorbable. 
In order to prove the claim, we recall the inequalities $\inf(a)\leqslant \inf(ab)$ and $\sup(a)\leqslant \sup(ab)$ for any $a,b\in G$ with $b$ positive. They imply
$$\inf(x)\leqslant \inf(xu)\leqslant \inf(xuv)\leqslant \inf(xuvw)=\inf(x),$$ 
$$\sup(x)\leqslant \sup(xu)\leqslant \sup(xuv)\leqslant \sup(xuvw)=\sup(x).$$
\end{proof}

\begin{lemma}\label{L:absorblength}
Let $y$ be an absorbable element with canonical length $k$. Then there exists $x$ with infimum 0 and supremum $k$ which absorbs $y$.
Moreover $k$ is the smallest possible number of factors in an element with infimum 0 absorbing $y$.  
\end{lemma}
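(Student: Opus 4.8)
The plan is to reduce to the case where $y$ is positive, take an arbitrary element absorbing $y$, normalise its infimum to $0$, and then repeatedly delete the first factor of its left normal form until exactly $k$ factors remain, checking at each step that absorbability of $y$ survives.

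First, by Lemma~\ref{L:BasicAbsorb}(ii) and the observation in its proof, $x$ absorbs $y$ if and only if $xy$ absorbs $y^{-1}$, and these two absorbers have the same infimum and the same supremum; since also $\ell(y)=\ell(y^{-1})=k$, it suffices to treat the case $\inf(y)=0$ (the case $\sup(y)=0$ then follows by applying the statement to $y^{-1}$ and passing from an absorber $\tilde x$ of $y^{-1}$ to the absorber $\tilde x y^{-1}$ of $y$). So assume $y=y_1\cdots y_k$ is in left normal form with $\sup(y)=k$, and $k\geqslant 1$ (if $k=0$ then $y=1$ and the statement is trivial). Choosing any $x$ absorbing $y$ and replacing it by $\Delta^{-\inf(x)}x$ --- which still absorbs $y$, since multiplication by a power of $\Delta$ shifts $\inf$ and $\sup$ by the same amount --- we may assume $\inf(x)=0$; write $\sup(x)=s$, so $x=x_1\cdots x_s$ in left normal form and $xy$ is positive with $\sup(xy)=s$. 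The bound $s\geqslant k$ is immediate from $\sup(y)=\sup(x^{-1}(xy))\leqslant -\inf(x)+\sup(xy)=s$; since an element $w$ with $\inf(w)=0$ has exactly $\sup(w)$ factors in its left normal form, this (together with the existence part below) already gives the last assertion of the lemma.

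Now the trimming step: if $s>k$, I claim that $x':=x_1^{-1}x$, whose left normal form is $x_2\cdots x_s$ (so $\inf(x')=0$ and $\sup(x')=s-1$), again absorbs $y$; iterating this $s-k$ times then yields the desired absorber of infimum $0$ and supremum $k$. The infimum condition is easy: $x'y$ is positive, and if $\Delta\preccurlyeq x'y$, say $x'y=\Delta z$ with $z\in P$, then $xy=x_1\cdot x'y=x_1\Delta z=\Delta\,\tau(x_1)\,z\in\Delta P$, contradicting $\inf(xy)=0$; hence $\inf(x'y)=0=\inf(x')$. For the supremum, $\sup(x'y)\geqslant\sup(x')=s-1$ holds automatically, and for the reverse inequality I use that $\sup(xy)=s$ means $xy\preccurlyeq\Delta^s$, i.e.\ $y\preccurlyeq x^{-1}\Delta^s=\partial x$; since moreover $\sup(y)=k<s$, this forces $y\preccurlyeq\partial x\wedge\Delta^k$, which by a standard property of left normal forms equals the product of the first $k$ factors of the left normal form of $\partial x$. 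On the other hand, the formula recalled above for the left normal form of $\partial$ shows that the left normal form of $\partial(x_1^{-1}x)$ is obtained from that of $\partial x$ by deleting its last factor (the factor $x_1$ of $x$ contributes only to that last factor of $\partial x$), so $\partial x$ and $\partial x'$ have the same first $k$ factors. Hence $y$, being a prefix of the first $k$ factors of $\partial x$, is also a prefix of $\partial x'=(x')^{-1}\Delta^{s-1}$, i.e.\ $\sup(x'y)\leqslant s-1$; thus $\sup(x'y)=s-1=\sup(x')$ and $x'$ absorbs $y$, so the induction goes through.

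The only point carrying real content is the supremum half of the trimming step: one has to observe that, once $x$ is strictly longer than $y$, the equality $\sup(xy)=\sup(x)$ depends only on the last $k$ factors of the left normal form of $x$ --- equivalently, on the first $k$ factors of $\partial x$ --- so that shortening $x$ from the left cannot break it. Everything else (the reduction to positive $y$, the normalisation of $\inf(x)$, the infimum bookkeeping, and the standard fact that $z\wedge\Delta^k$ is the product of the first $k$ factors of the left normal form of $z$) is routine.
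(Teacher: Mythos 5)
Your proof is correct, and while it ends up producing the same absorber as the paper (the last $k$ factors of a normalised absorber $x=x_1\cdots x_s$), the verification is genuinely different. The paper deletes all $s-k$ superfluous initial factors at once and checks absorbability by running through the factors $y_1,\ldots,y_k$ of $y$ one by one, invoking two ``locality'' properties of the left normal form algorithm (from Proposition~1 of the Gebhardt--Gonz\'alez-Meneses paper): that $\sup(zs)=\sup(z)$ iff the product of $s$ with the last factor of $z$ is simple, and that the tail of the normal form of $zs$ depends only on the tail of $z$. Your argument instead deletes one initial factor at a time and replaces that machinery with the duality $\sup(xy)\leqslant\sup(x)\Leftrightarrow y\preccurlyeq\partial x$, combined with the explicit formula $\partial(x_1\ldots x_s)=\tau^{0}(\partial x_s)\cdots\tau^{s-1}(\partial x_1)$, which shows that deleting the first factor of $x$ only deletes the \emph{last} factor of $\partial x$ and hence cannot disturb the prefix $\partial x\wedge\Delta^k$ containing $y$. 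All the supporting steps (reduction to positive $y$ via Lemma~\ref{L:BasicAbsorb}(ii), normalisation of $\inf(x)$, the bound $s\geqslant k$, the infimum half of the trimming step) match the paper's. What your route buys is a cleaner, more self-contained argument: it uses only facts already recalled in Section~\ref{SS:Absorbable} (the formula for the normal form of $\partial y$ and the standard identity $z\wedge\Delta^k=z_1\cdots z_k$) rather than an appeal to the normal-form computation procedure; the price is the iteration, which is harmless. One could even note that your key observation -- that whether $x$ absorbs a positive $y$ of length $k$ depends only on the last $k$ factors of $x$ -- is exactly the content the paper extracts from its ``two observations'', but obtained by a prefix/complement computation instead.
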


Before giving the proof, we mention that Lemma~\ref{L:absorblength} yields, in principle, an algorithm for testing whether any given element~$y$ of $G$ is absorbable. It suffices to test, for every~$x\in G$ with $\inf(x)=0$ and $\sup(x)=\ell(y)$, whether $x$ absorbs~$y$. We do not know if there exists a polynomial-time algorithm for testing absorbability. 

\begin{proof}[Proof of Lemma~\ref{L:absorblength}]
If $k=0$ there is nothing to prove. 
Let $y$ be absorbable by $\hat x=\Delta^px$, with $\inf(x)=0$; then $\inf(xy)=\inf(\hat xy)-p=\inf(\hat x)-p=\inf(x)$ and similarly for the supremum, showing that $y$ is absorbable by $x$. 

Thus $y$ is absorbed by an element $x$ with $\inf(x)=0$. We have to show that we can take $x$ with the same length~$k$ as~$y$, and that this $k$ is minimal. We can restrict our attention to the case where $y$ is positive, i.e.\  $\inf(y)=0$; this is because $y$ and $y^{-1}$ can be absorbed by elements of the same length, as seen in the proof of Lemma~\ref{L:BasicAbsorb}.

From now on we assume that $y$ is positive and $y=y_1\ldots y_k$ is its left normal form. The absorbing element $x$ with $\inf(x)=0$ is at least of length~$k$, because $\sup(x)=\sup(xy)\geqslant \sup(y)=k$. We have to prove the existence of such an~$x$ with length exactly~$k$. More precisely, if $x=x_1\ldots x_l \, x_{l+1}\ldots x_{l+k}$ absorbs~$y$, we will show that so does $\tilde x=x_{l+1}\ldots x_{l+k}$.
First, by hypothesis $\inf(xy)=\inf(x)=0$, so 
$$0= \inf(\tilde x)\leqslant\inf(\tilde x y)\leqslant \inf(x y)=\inf(x)=0$$
and the condition on the infima is satisfied.
It remains to be shown that for all $i=1,\ldots, k$ the left normal form of $\tilde x y_1\ldots y_i$ has only $k$ letters. 

This is based on the following two observations. Firstly, if $z_1\ldots z_r$ is a left normal form with $z_1\neq  \Delta$ and if $s$ is a simple element with $\inf (zs)=0$, then the left normal form of $zs$ also has $r$ letters if and only if $z_rs$ is simple. 
Otherwise this left normal form has $r+1$ letters. Moroever, in the former case, if $r\geqslant 2$, for $j=2,\ldots, r$, the $j$th letter of the left normal form of $zs$ is fully determined by $s$ and $z_{j-1},\ldots, z_r$ (all the preceding letters of~$z$do not enter into consideration); this is our second observation. These two facts follow by inspection of the procedure for calculating normal forms explained in~\cite{Gebhardt-GM}, Proposition~1. 

Since $\sup(xy_1)=\sup(x)$, the first observation tells us that $x_{l+k}y_1$ must be simple, which in turn implies that $\sup(\tilde xy_1)=\sup(\tilde x)$. 
This terminates the proof if $k=1$. Moreover, if $k\geqslant 2$, the second observation implies that for $j= 2,\ldots, k$, the $j$th letter of the left normal form of $\tilde x y_1$ coincides with the $l+j$th letter of the left normal form of $x y_1$. 
Applying again the first observation together with the absorbability of $y$ in $x$, hence of $y_2$ in $xy_1$, we see that $\sup(\tilde x y_1y_2)=k$ and we are done if $k=2$. Moreover (if $k\geqslant 3$), for $j=3,\ldots, k$, thanks to the second observation, the $j$th letter of the normal form of $\tilde x y_1 y_2$ coincides with the $l+j$th letter of the normal form of $x y_1 y_2$. Continuing inductively, we obtain the desired result that $\sup(\tilde x y_1\ldots y_i)=k$, for all $i=1,\ldots, k$.
\end{proof}

\begin{example}
\begin{itemize}
\item[(1)] Whenever $n\geqslant 3$, in the ``classical" Garside structure on the free abelian group $(\mathbb Z^n,\mathbb N^n,(1,1,\ldots,1))$, any multiple of a standard generator is absorbable.   
\item[(2)] In the braid group $B_4$ with its classical Garside structure, the braid $y=\sigma_1^2\sigma_2^2\sigma_3^2\sigma_2^2\sigma_1$ is absorbable, e.g.\ by $x=\sigma_1\sigma_2^4\sigma_1^2\sigma_2\sigma_3$: we calculate 
$$\sigma_1\sigma_2\ .\ \sigma_2\ .\ \sigma_2\ .\ \sigma_2\sigma_1\ .\ \sigma_1\sigma_2\sigma_3 \ \cdot \ \ \sigma_1\ .\ \sigma_1\sigma_2\ .\ \sigma_2\sigma_3\ .\ \sigma_3\sigma_2\ .\ \sigma_2\sigma_1 =\phantom{OOOOOOO}$$ 
$$\phantom{OOOOOOOO}= \sigma_1\sigma_2\sigma_1\ .\ \sigma_1\sigma_2\sigma_1\sigma_3\ .\ \sigma_1\sigma_2\sigma_3\sigma_2\ .\ \sigma_2\sigma_3\sigma_2\ .\ \sigma_2\sigma_3\sigma_2\sigma_1$$
Notice that $y$ is pseudo-Anosov and rigid. This is the most surprising example of an absorbable braid we know, and the longest non-reducible one. 
\item[(3)] The length 2 braid $(\sigma_1\sigma_3)^2$ in~$B_4$ is not absorbable, as shows Lemma~\ref{L:absorblength} together with an inspection of all braids with infimum 0 and supremum 2. Nor is absorbable any 4-braid with infimum 0 and left normal form $x_1\ldots x_r$ such that for some $i=1,\ldots, r-1$, $x_i\succcurlyeq \sigma_1\sigma_3$ and $\sigma_1\sigma_3\preccurlyeq x_{i+1}$, by Lemma~\ref{L:subword}.   
\item[(4)]\label{I:sInvDelta} In any Garside group, if $s$ is an atom, then the simple element $y=s^{-1}\Delta$ is not absorbable. Indeed, if $y$~was absorbable then, by Lemma~\ref{L:absorblength}, it could be absorbed by a \emph{simple } element~$x\neq 1$. We would then have $xy \prec \Delta$. Since left divisors of $\Delta$ are also right divisors of~$\Delta$, this means that there exists a simple element $a\neq 1$ satisfying $axy=\Delta$. By combining this with the equality $sy=\Delta$, we obtain $ax=s$, contradicting the hypothesis that~$s$ is an atom.
\item[(5)] As an application of the previous example, in the braid group $B_n$ with its classical Garside structure, the braid $\sigma_i^{-1}\Delta$, for any $i$ between 1 and $n-1$, is not absorbable (even though it is the complement of the absorbable braid $\sigma_i$). 
\end{itemize}
\end{example}


\subsection{The additional length complex}

\begin{definition}\label{D:AddLengthCx}
Suppose $G$ is a Garside group, the group of fractions of a Garside monoid $(P,\Delta)$. We define the \emph{additional length complex} $\mathcal C_{AL}(G,P,\Delta)$ (generally abbreviated as $\mathcal C_{AL}(G)$, or even $\mathcal C_{AL}$) to be the following (usually locally infinite) connected graph. 
\begin{itemize}
\item The vertices are in correspondence with $G/\langle \Delta\rangle$, 
that is the cosets $g\Delta^{\mathbb Z}=\{g\Delta^z \ | \ z\in\mathbb Z\}$. 
For each vertex $v$ we have a unique distinguished representative with infimum 0, which we denote~$\underline v$. 
\item Two vertices $v=\underline{v}\Delta^{\Z}$ and $w=\underline{w}\Delta^{\Z}$ of $\mathcal C_{AL}$ are connected by an edge if one of the following happens:
\begin{enumerate} 
\item There exists a non-trivial, non-$\Delta$ simple element $m$ so that the element $\underline{v} m$ represents the coset $w$. 
This is equivalent to saying that there is a simple element $m'\neq 1,\Delta$ such that $\underline{w}m'$ belongs to the coset $v$.
(This first type of edges is as in Bestvina's normal form complex, see~\cite{CMW}.) 

\item There exists an absorbable element $y$ of~$G$ so that $\underline{v}y$ belongs to the coset~$w$. 
This is equivalent to saying that there is an absorbable element $y'$ of $G$ so that $\underline{w}y'$ belongs to the coset $v$. 
\end{enumerate}
\end{itemize}

As usually, a metric structure on the above complex is given simply by declaring that every edge is of length 1. We call this metric the \emph{additional length metric}. The distance between two vertices $v$ and $w$ in $\mathcal C_{AL}$ will be denoted $d_{AL}(v,w)$. The group~$G$ acts on the left by isometries on this complex.
\end{definition}

\begin{remark}
(a) The idea of this definition is that in the additional length complex, a group element $y$ is close to the identity if ``multiplying by the element~$y$ does not necessarily add any length'' -- hence the name of the complex.

(b) If, in Definition~\ref{D:AddLengthCx}, we leave out the second type of edges, then we obtain precisely the $1$-skeleton of the Bestvina normal form complex as described in~\cite{CMW}. Thus the additional length complex can be thought of as a squashing of the Bestvina normal form complex.
\end{remark}

Next we shall associate to each pair of vertices $v,w$ of $\mathcal C_{AL}$ a preferred path $A(v,w)$ between $v$ and $w$:
\begin{definition} (See Definition 6.1. in~\cite{CMW}). 
Let $v=\underline{v}\Delta^{\Z}$ and $w$ be two vertices of~$\mathcal C_{AL}$. 
\begin{itemize}
\item The \emph{preferred path} $A(1,v)$ is
the connected subgraph of $\mathcal C_{AL}$ given by the left normal form of $\underline v$. That is, if $v_1\ldots v_{\sup(\underline v)}$ is the left normal form of $\underline v$, $A(1,v)$ is the path starting at $1$ whose edges are successively labeled $v_1,\ldots,v_{\sup(\underline v)}$; for $i=0,\ldots, \sup(\underline v)$, the distinguished representative of the $i$th vertex along $A(1,v)$ is $\Delta^{i}\wedge \underline v$.
\item The preferred path $A(v,w)$ from $v$ to $w$ is given by the translation on the left by $\underline{v}$ of the preferred path 
$A(1,(\underline{v}^{-1}\underline{w})\Delta^{\mathbb Z})$. That is, if $x=x_1\ldots x_r$ is the left normal form of the distinguished representative of $({\underline v}^{-1}\underline w)\Delta^{\Z}$, $A(v,w)$ is the path of length $r$ starting at $v$ whose edges are successively labeled $x_1,\ldots, x_r$.  
\end{itemize}
\end{definition}
Note that the path $A(v,w)$ uses only the edges of $\mathcal C_{AL}$ coming from the Cayley graph of~$G$ (with respect to the divisors of $\Delta$), not those coming from absorbable elements, and that the length of the path may well be much larger than the distance between $1$ and $v$. As normal forms are unique, if two vertices $v$ and $w$ are connected by a path $\gamma$ whose edges are labeled by simple elements $s_1,\ldots, s_r$ satisfying that for $i=1,\ldots, r-1$, $(s_i,s_{i+1})$ is a left-weighted pair, then $\gamma=A(v,w)$.

In order to get a more detailed picture of this family of paths, we claim the following:

\begin{lemma}\label{L:pgcd}
Let $v=\underline{v}\Delta^{\mathbb Z}$ and $w=\underline{w}\Delta^{\Z}$ be two vertices of $\mathcal C_{AL}$. Then 
$A(v,w)$ is the concatenation of the paths $A(v,(\underline{v}\wedge\underline{w})\Delta^{\mathbb Z})$ and $A((\underline{v}\wedge\underline{w})\Delta^{\mathbb Z},w)$. 
I.e., the preferred path between $v$ and $w$ passes through the vertex $(\underline{v}\wedge \underline{w})\Delta^{\Z}$
\end{lemma}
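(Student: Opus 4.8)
The plan is to reduce to the case $v=1$ by equivariance, and then to make the preferred path completely explicit: the point is that the hypothesis that $\underline v\wedge\underline w$ is the \emph{greatest} common divisor translates into an absence of cancellation in a product of left normal forms.

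Since $G$ acts on $\mathcal C_{AL}$ by graph automorphisms and $A(v,w)=\underline v\cdot A\bigl(1,(\underline v^{-1}\underline w)\Delta^{\mathbb Z}\bigr)$ by definition, it is enough to show that $A\bigl(1,(\underline v^{-1}\underline w)\Delta^{\mathbb Z}\bigr)$ passes through the vertex $\bigl(\underline v^{-1}(\underline v\wedge\underline w)\bigr)\Delta^{\mathbb Z}$. Setting $g=\underline v\wedge\underline w$, $a=g^{-1}\underline v$ and $b=g^{-1}\underline w$, one has $a,b\in P$, $\ a\wedge b=1$, $\ \underline v^{-1}\underline w=a^{-1}b$ and $\underline v^{-1}(\underline v\wedge\underline w)=a^{-1}$, so the statement reduces to: \emph{for $a,b\in P$ with $a\wedge b=1$, the preferred path $A\bigl(1,(a^{-1}b)\Delta^{\mathbb Z}\bigr)$ passes through the vertex $a^{-1}\Delta^{\mathbb Z}$}. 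If $b=1$ this vertex is the terminal vertex of the path, and if $a=1$ it is the initial vertex $\Delta^{\mathbb Z}$, so both cases are trivial; I may therefore assume $a\neq1\neq b$, and then a one-line atom argument (an atom dividing $b$ divides $\Delta$, hence $a$, hence $a\wedge b$) gives $\inf(a)=\inf(b)=0$.

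With this in hand I would pin down the distinguished representative of $(a^{-1}b)\Delta^{\mathbb Z}$. Writing the left normal forms $a=a_1\cdots a_s$ and $b=b_1\cdots b_t$ (all factors proper simple, $s=\sup(a)\geqslant1$, $t=\sup(b)\geqslant1$), we have
\[
a^{-1}b\,\Delta^{s}=(a^{-1}\Delta^{s})(\Delta^{-s}b\,\Delta^{s})=(\partial a)\,\tau^{s}(b),
\]
where (by the formula recalled in the excerpt) $\partial a$ has left normal form $a'_s\cdots a'_1$ with $a'_i=\tau^{s-i}(\partial a_i)$, and $\tau^{s}(b)$ has left normal form $\tau^{s}(b_1)\cdots\tau^{s}(b_t)$. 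The only thing to check is that the junction pair $\bigl(a'_1,\tau^{s}(b_1)\bigr)$ is left-weighted; since $\partial\partial=\tau$ on simple elements, $\partial(a'_1)=\tau^{s}(a_1)$, so this amounts to $\tau^{s}(a_1)\wedge\tau^{s}(b_1)=1$, i.e.\ to $a_1\wedge b_1=1$, which holds because $a_1\wedge b_1\preccurlyeq a\wedge b=1$. Hence $a^{-1}b\,\Delta^{s}$ is in left normal form $a'_s\cdots a'_1\cdot\tau^{s}(b_1)\cdots\tau^{s}(b_t)$; in particular its infimum is $0$, so it \emph{is} the distinguished representative $\underline x$ of $(a^{-1}b)\Delta^{\mathbb Z}$, with $\ell(\underline x)=s+t$.

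Now walking along the first $s$ edges of $A\bigl(1,(a^{-1}b)\Delta^{\mathbb Z}\bigr)$, which by definition are labelled by the first $s$ factors $a'_s,\dots,a'_1$ of the left normal form of $\underline x$, reaches the vertex with representative $a'_s\cdots a'_1=\partial a=a^{-1}\Delta^{s}$, whose coset is exactly $a^{-1}\Delta^{\mathbb Z}$; and since $t\geqslant1$ this is an interior vertex of the path. This proves that $A(v,w)$ passes through $(\underline v\wedge\underline w)\Delta^{\mathbb Z}$. The ``concatenation'' refinement is then immediate, for the two sub-paths of $A(v,w)$ cut out by this vertex are labelled by left-weighted sequences of proper simple elements, hence coincide with the corresponding preferred paths by the uniqueness remark following the definition of $A(\cdot,\cdot)$. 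I expect the main obstacle to be the bookkeeping of the powers of $\tau$ in the normal-form computation, together with the necessity of disposing of the cases $a=1$ and $b=1$ beforehand: it is precisely when $a\neq1\neq b$ that $\inf(a)=\inf(b)=0$, which is what makes the index $s$ land at a genuine vertex and the product $(\partial a)\,\tau^{s}(b)$ be already left-weighted.
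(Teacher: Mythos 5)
Your proof is correct and follows essentially the same route as the paper's: the same decomposition $\underline v=da$, $\underline w=db$ with $a\wedge b=1$, the same identification of the distinguished representative of $(\underline v^{-1}\underline w)\Delta^{\mathbb Z}$ as $\partial a\cdot\tau^{\sup(a)}(b)$, and the same left-weightedness check at the junction pair coming from $a_1\wedge b_1=1$. The only differences are cosmetic: you spell out the degenerate cases $a=1$ or $b=1$ and the fact that $\inf(a)=\inf(b)=0$, which the paper leaves implicit.
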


\begin{proof}
Set $d=\underline v \wedge \underline w$. We have positive elements $a$ and $b$ such that $\underline v=da$, $\underline w=db$ and $a\wedge b=1$. By definition $A(v,w)$ is the left translate by $\underline v$ of the path $A(1,(\underline v^{-1}\underline w)\Delta^{\mathbb Z})$, which connects the identity vertex with the vertex represented by $\underline v^{-1}\underline w$. We shall see that the distinguished representative of the latter vertex is the element $\partial a\cdot \tau^r(b)$, where $r$ is the supremum of $a$.  Indeed, we have 
$$\partial a\cdot\tau^r(b)=a^{-1}\Delta^r\cdot\tau^r(b)=a^{-1}b\Delta^r=(a^{-1}d^{-1})(db)\Delta^r=\underline v^{-1}\underline w\Delta^r,$$ 
which shows that our element represents the correct vertex. Moreover, if we write the left normal forms as $a=a_1\ldots a_r$ and $b=b_1\ldots b_s$, we have
$$\partial a\cdot\tau^r(b)= \partial a_r\ldots \tau^{r-1}(\partial a_1)\cdot\tau^r(b_1)\ldots \tau^r(b_s),$$
which is in left normal form as written because, as $a\wedge b=1$, 
$$(\tau^{r-1}(\partial a_1),\tau^r(b_1))$$ 
is a left-weighted pair.
Thus $\inf(\partial a\cdot\tau^r(b))=0$ and this shows that $\partial a\cdot\tau^{r}(b)$ is the desired distinguished representative. This says moreover that the path $A(1,\underline v^{-1}\underline w\Delta^{\mathbb Z})$ is the concatenation of the paths $A(1,\partial a\Delta^{\Z})$ and $A(\partial a\Delta^{\mathbb Z},\partial a\tau^r(b)\Delta^{\Z})$, that is, of $A(1,a^{-1}\Delta^{\Z})$ and $A(a^{-1}\Delta^{\Z},{\underline{v}}^{-1}\underline w \Delta^{\Z})$. 
After translation by $\underline v$, using the equality $\underline va^{-1}=d$, we see that our path $A(v,w)$ is the concatenation of $A(v,d\Delta^{\Z})$ and $A(d\Delta^{\Z},w)$, as we wanted to show. 
\end{proof}

\begin{lemma}\label{L:Symmetry}
The preferred paths are symmetric: for any vertices $v,w$ of $\mathcal C_{AL}$, we have $A(v,w)=A(w,v)$.
\end{lemma}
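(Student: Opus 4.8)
The statement $A(v,w)=A(w,v)$ should be read as: the two paths run through the same vertices and edges of $\mathcal C_{AL}$, in opposite order. The plan is to reduce to the case where one endpoint is the identity vertex and the other is represented by a positive element of infimum~$0$, and then to compute directly using the left normal form and the right complement $\partial$.

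\emph{Reduction.} Put $d=\underline v\wedge\underline w$. Since $d\preccurlyeq\underline v$ and $\inf(\underline v)=0$, we have $\inf(d)=0$, so $d$ is the distinguished representative of $d\Delta^{\Z}$, and we may write $\underline v=da$ with $a$ positive and $\inf(a)=0$. By Lemma~\ref{L:pgcd}, applied both to $(v,w)$ and to $(w,v)$ (note $\underline w\wedge\underline v=d$ as well), $A(v,w)$ is the union of $A(v,d\Delta^{\Z})$ and $A(d\Delta^{\Z},w)$, while $A(w,v)$ is the union of $A(w,d\Delta^{\Z})$ and $A(d\Delta^{\Z},v)$. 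Hence it suffices to prove $A(v,d\Delta^{\Z})=A(d\Delta^{\Z},v)$ (the analogous statement with $w$ being identical). Unwinding the definition, $A(v,d\Delta^{\Z})=da\cdot A(1,a^{-1}\Delta^{\Z})$ and $A(d\Delta^{\Z},v)=d\cdot A(1,a\Delta^{\Z})$; left-multiplying by $d^{-1}$, which is a graph automorphism of $\mathcal C_{AL}$, and using $a\cdot A(1,a^{-1}\Delta^{\Z})=A(a\Delta^{\Z},1)$ (valid since $\inf(a)=0$), we reduce to
$$A(1,a\Delta^{\Z})=A(a\Delta^{\Z},1),\qquad a\ \text{positive},\ \inf(a)=0.$$

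\emph{Base case.} Let $a=a_1\ldots a_r$ be the left normal form. Then $A(1,a\Delta^{\Z})$ has consecutive vertices $u_i:=(a_1\ldots a_i)\Delta^{\Z}$ for $i=0,\ldots,r$. On the other hand $A(a\Delta^{\Z},1)=a\cdot A(1,a^{-1}\Delta^{\Z})$; since $\inf(a^{-1})=-r$ and $\sup(a^{-1})=0$, the distinguished representative of $a^{-1}\Delta^{\Z}$ is $a^{-1}\Delta^r=\partial a$, with left normal form $a'_r\ldots a'_1$ where $a'_i=\tau^{r-i}(\partial a_i)=\tau^{r-i}(a_i^{-1})\Delta$. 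Therefore the $j$-th vertex of $A(a\Delta^{\Z},1)$ is $(a_1\ldots a_r\,a'_r\ldots a'_{r-j+1})\Delta^{\Z}$ for $j=0,\ldots,r$. The key computation is the telescoping identity
$$a_{r-j+1}\ldots a_r\cdot a'_r\ldots a'_{r-j+1}=\Delta^{j},$$
which follows by induction on $j$ from the formula $a'_i=\tau^{r-i}(a_i^{-1})\Delta$ together with the relation $\Delta^{k}\tau^{k}(x)=x\Delta^{k}$. Granting it, the $j$-th vertex of $A(a\Delta^{\Z},1)$ equals $(a_1\ldots a_{r-j})\Delta^{j}\Delta^{\Z}=u_{r-j}$. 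So $A(1,a\Delta^{\Z})$ and $A(a\Delta^{\Z},1)$ pass through the same vertices $u_0,\ldots,u_r$ in opposite order, hence through the same edges, which settles the base case and thus the lemma.

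I expect the only delicate point to be the bookkeeping in the reduction: one must keep careful track of which coset representatives have infimum~$0$, and make sure that left-multiplication is invoked purely as a graph automorphism, so that the various powers of $\Delta$ (and the conjugations by $\tau$ they would otherwise force) never actually intervene. Once everything is expressed in terms of the positive element $a$, the telescoping identity and the conclusion are routine.
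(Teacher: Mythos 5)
Your proof is correct and follows essentially the same route as the paper's: decompose at $d=\underline v\wedge\underline w$, pass to the complement $\partial a$ with its $\tau$-twisted normal form, and match the vertex sequences in reverse order. The only organizational difference is that you invoke Lemma~\ref{L:pgcd} to reduce to the two half-paths $A(1,a\Delta^{\Z})=A(a\Delta^{\Z},1)$ and verify a single telescoping identity, whereas the paper runs the analogous computation ($\underline v(\Delta^i\wedge\partial a\cdot\tau^r(b))=\underline w(\Delta^{r+s-i}\wedge\partial b\cdot\tau^s(a))$) across the whole path at once; both amount to the same calculation.
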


First, note that the lemma has nothing to do with our strange metric, the analogue result  is also true in Bestvina's normal form complex -- see Lemma 6.4 in~\cite{CMW}).

\begin{proof}
As in the proof of Lemma~\ref{L:pgcd}, set $d=\underline v\wedge \underline w$. We have two elements $a,b$ of $G$, with $\inf(a)=\inf(b)=0$, $\underline v=da$, $\underline w=db$ and $a\wedge b=1$. Set moreover $r=\sup(a)$ and $s=\sup(b)$.
By definition, $A(v,w)$ is the left translate by $\underline v$ of the path $A(1,(\underline v^{-1}\underline w)\Delta^{\Z})$ and we have seen in the proof of Lemma~\ref{L:pgcd} that the latter is given by the left normal form of $\partial a\cdot \tau^r(b)$. 
Similarly, $A(w,v)$ is the left translate by $\underline w$ of the normal form of $\partial b\cdot \tau^s(a)$. 
First we note that both paths have the same length, namely $r+s$. For $0\leqslant i\leqslant r+s$, we will show that
$\underline v(\Delta^i\wedge \partial a\cdot\tau^r(b))$ represents the same vertex as $\underline w(\Delta^{r+s-i}\wedge \partial b\cdot\tau^s(a))$, hence showing the lemma. In other words, when traveling along the path $A(v,w)$ or along the path $A(w,v)$, one meets exactly the same vertices of $\mathcal C_{AL}$, but in the reverse order.

First, assume $0\leqslant i<r$. On the one hand, 
$$\underline v(\Delta^i\wedge \partial a\cdot\tau^r(b))=da(\Delta^i\wedge \partial a)=da_1\ldots a_{r-i}\Delta^i.$$
On the other, 
$$\underline w(\Delta^{r+s-i}\wedge \partial b\cdot\tau^s(a))=db\partial b(\Delta^{r-i}\wedge \tau^s(a))=da_1\ldots a_{r-i}\Delta^s.$$

Next, assume that $r< i\leqslant r+s$, that is $i=r+j$, for $0< j\leqslant s$. 
On the one hand, 
$$\underline v(\Delta^{r+j}\wedge \partial a\cdot\tau^r(b))=da\partial a(\Delta^j\wedge \tau^r(b))=d b_1\ldots b_j\Delta^r.$$
On the other, 
$$\underline w(\Delta^{r+s-(r+j)}\wedge \partial b\cdot\tau^s(a))=db(\Delta^{s-j}\wedge \partial b)=db_1\ldots b_j\Delta^{s-j}.$$ 

Finally, if $i=r$, we have $\underline v(\Delta^r\wedge \partial a\cdot \tau^r(b))=da\partial a=d\Delta^r$ and 
$\underline w(\Delta^{s}\wedge \partial b \cdot \tau^s(a))=db\partial b=d\Delta^s$.
\end{proof}

Here is our main result

\begin{theorem}\label{T:main}
For any Garside group $(G,P,\Delta)$, the complex $\mathcal C_{AL}$ is 60-hyperbolic. Moreover, the family of paths $A(v,w)$ with $v,w\in G/\langle \Delta\rangle$, forms a family of uniform unparametrized quasi-geodesics in the complex: for any $v,w\in G/\langle \Delta\rangle$, the Hausdorff distance between $A(v,w)$ and a geodesic from $v$ to $w$ is bounded above by~39.
\end{theorem}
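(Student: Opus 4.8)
The plan is to apply Bowditch's "Guessing Geodesics Lemma" (from~\cite{Bowditch}) to the family of preferred paths $\{A(v,w)\}$. That lemma states that a connected graph is hyperbolic, with the given family of paths uniformly close to geodesics, provided one can verify a short list of axioms: each $A(v,w)$ is a connected subgraph containing $v$ and $w$; the family is coarsely symmetric (which is exactly Lemma~\ref{L:Symmetry}, giving it for free, not just coarsely); if $d_{AL}(v,w)\leqslant 1$ then $A(v,w)$ has uniformly bounded diameter; and — the substantive condition — for any three vertices $u,v,w$, the path $A(v,w)$ is contained in a uniformly bounded neighbourhood of $A(u,v)\cup A(u,w)$ (the thin-triangles axiom). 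Granting these with explicit constants, Bowditch's lemma produces the hyperbolicity constant $60$ and the Hausdorff bound $39$; the bulk of the work is extracting those explicit constants.

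First I would set up the combinatorial backbone. By Lemma~\ref{L:pgcd} the preferred path $A(v,w)$ factors through the gcd vertex $(\underline v\wedge\underline w)\Delta^{\Z}$, and along $A(1,v)$ the $i$-th vertex has distinguished representative $\Delta^i\wedge\underline v$; so the preferred paths are governed entirely by the lattice operations $\wedge$ and the left normal form. The key quantitative input is that an \emph{absorbable} element gives an edge in $\mathcal C_{AL}$: this is what lets one "shortcut" across long normal-form paths. So I would first prove a lemma of the form: if $\underline v$ and $\underline w$ agree on a long common prefix — say $\Delta^i\wedge\underline v=\Delta^i\wedge\underline w$ for $i$ up to some threshold — then $v$ and $w$ are at bounded $d_{AL}$-distance, because the "difference" element $\underline v^{-1}\underline w$, after stripping the common prefix $d=\underline v\wedge\underline w$, has a controlled structure and any sufficiently "thick" simple factor (one that is a multiple of, or divisible by, some fixed atom — cf.\ Example (3)) or any reducible-looking middle chunk is absorbable by Lemma~\ref{L:subword}. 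The honest statement will be a "thinness on the nose" estimate: a vertex of $A(v,w)$ far from the endpoints is within bounded $d_{AL}$-distance of the gcd vertex. I expect this is where one needs to be careful about which elements are actually absorbable — the examples show absorbability is delicate — so the lemma will likely be phrased in terms of a uniform bound $K$ such that \emph{any} element with infimum $0$, canonical length $\geqslant K$, that occurs as a "middle segment" is absorbable, and then checking that such a $K$ exists for every Garside group.

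With that estimate in hand, the thin-triangles axiom reduces to a purely Garside-theoretic statement about three positive elements $\underline u$, $\underline v$, $\underline w$ and their pairwise gcds, using the standard fact that $\underline u\wedge\underline v$, $\underline u\wedge\underline w$, $\underline v\wedge\underline w$ satisfy a tripod/ultrametric-type inequality with respect to prefixes (two of the three pairwise gcds coincide with the triple gcd up to the prefix where the normal forms diverge). I would then trace through Bowditch's lemma with the explicit constants coming from: the length-$1$ condition, the bound $K$ on "automatic absorbability", and the tripod inequality, arriving at $60$ and $39$; these are presumably obtained by the same bookkeeping as in~\cite{CMW} for Bestvina's normal form complex, adapted to the squashed metric. \textbf{The main obstacle} I anticipate is not the hyperbolicity machinery itself but establishing the uniform bound $K$ — i.e., showing that in \emph{every} finite-type Garside group, once the middle of a normal-form word is long enough it must be absorbable, with $K$ independent of $G$. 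The examples in the excerpt (the non-absorbable $(\sigma_1\sigma_3)^2$, the non-absorbable $s^{-1}\Delta$) warn that short or special elements resist absorption, so the argument must genuinely exploit length: one needs to exhibit, for a long positive $v=v_1\cdots v_r$, an explicit absorbing $x$ with $\inf x=0$, $\sup x=r$, presumably built by "padding" $v$ on the left with complements so that no new infimum or supremum is created — essentially reversing the normal-form computation — and then invoke Lemma~\ref{L:absorblength} to keep the length minimal. Making that construction uniform over all Garside structures is the crux.
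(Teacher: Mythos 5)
Your overall strategy --- Bowditch's guessing-geodesics criterion applied to the preferred paths $A(v,w)$, with the two axioms (bounded diameter of $A(v,w)$ when $d_{AL}(v,w)\leqslant 1$, and uniform thinness of preferred-path triangles) --- is exactly the right frame, and matches the paper. But the substantive step you propose for the thin-triangles axiom contains a fatal error. You want a uniform constant $K$ such that \emph{any} element of infimum $0$ and canonical length at least $K$ occurring as a ``middle segment'' of a normal form is absorbable, and you correctly identify this as the crux of your argument. That statement is false, and it cannot be repaired: if it were true, then every sufficiently long normal-form word would decompose into boundedly many absorbable pieces and $\mathcal C_{AL}$ would have bounded diameter, contradicting Theorem~\ref{T:InfDiam}. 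Indeed, the whole point of the braids $x_n$ constructed in Section~\ref{SS:InfDiam} is that their powers are arbitrarily long positive elements none of whose long subwords are absorbable (each contains a factor $\sigma_i^{-1}\Delta$, which by Example (4)--(5) and Lemma~\ref{L:subword} obstructs absorbability). Absorbability does not become automatic with length --- quite the opposite.

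The mechanism the paper actually uses requires no such lemma. Setting $d=\underline v\wedge\underline w$ with $p=\sup(d)$, one compares the $i$-th vertex of $A(1,v)$, whose distinguished representative is $\underline v\wedge\Delta^i$, with the $i$-th vertex of $A(1,d\Delta^{\Z})$, namely $d\wedge\Delta^i$. The positive element $y$ with $(d\wedge\Delta^i)\,y=\underline v\wedge\Delta^i$ is absorbable \emph{by $d\wedge\Delta^i$ itself}, tautologically from Definition~\ref{D:Absorbable}: both elements have infimum $0$ and supremum $i$, so multiplying by $y$ changes neither. Hence corresponding vertices are adjacent in $\mathcal C_{AL}$, and the initial length-$p$ segments of $A(1,v)$ and $A(1,w)$ stay within Hausdorff distance $2$ (Lemma~\ref{L:InitialSegments}). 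The remaining ingredient is not your tripod inequality for pairwise gcds (which is also not available in a general Garside group) but Lemma~\ref{L:Overlap}: using the right gcd and the complement $\partial$, one shows $\sup(\partial\underline v\wedge\partial a\cdot\tau^r(b))\geqslant r$, so the distinguished initial segments issuing from the two endpoints of each edge of the triangle together cover that edge, since $\sup(\underline v)\leqslant p+r$. This yields $2$-thinness with $h=2$, and the constants $60$ and $39$ then drop out of Bowditch's inequality $2h(6+\log_2(m+2))\leqslant m$ with $m=46{,}5$. You should rebuild your argument around the tautological absorbability of the quotient $(d\wedge\Delta^i)^{-1}(\underline v\wedge\Delta^i)$ rather than around any claim that long elements absorb.
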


\begin{remark}
Note that the hyperbolicity-constant is bounded independently of~$(G,P,\Delta)$.
\end{remark}

\begin{proof}[Proof of Theorem \ref{T:main}]
First recall Proposition 3.1 in~\cite{Bowditch} (the ``guessing geodesics lemma''):

\begin{proposition}\label{P:GuessingGeodesics}
Given $h\geqslant 0$, there is some $k\geqslant 0$ with the following property. Suppose that $X$ is a connected graph and that for each pair of vertices $x,y$ of $X$, we have associated a connected subgraph $A(x,y)\subseteq X$, with $x,y \in A(x,y)$. Suppose that 
\begin{itemize}
\item For all vertices $x,y$ of $X$ connected by an edge, $A(x,y)$ has diameter in $X$ at most $h$. 
\item For all vertices $x,y,z$ of $X$, $A(x,y)$ is contained in an $h$-neighborhood of the union $A(x,z)\cup A(y,z)$.
\end{itemize}
Then $X$ is $k$-hyperbolic. Moreover, if $m$ is any positive real number so that $2h(6+\log_2(m+2))\leqslant m$, we can take any number $k\geqslant \frac{3}{2}m-5h$. Moreover, for all vertices $x,y$ of $X$, the Hausdorff distance between $A(x,y)$ and any geodesic between $x$ and $y$ is bounded above by $m-4h$.
\end{proposition}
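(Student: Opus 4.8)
The plan is to reduce $k$-hyperbolicity to a Rips-type thin-triangles bound for genuine geodesic triangles, and to obtain the latter by transporting the slimness hypothesis on the paths $A(\cdot,\cdot)$ across a uniform Hausdorff estimate between each $A(x,y)$ and a geodesic. Thus the whole proposition rests on one estimate: there is $\eta=\eta(h)$, independent of the pair, with $d_{\mathrm{Haus}}\big(A(x,y),\sigma\big)\le\eta$ for every geodesic $\sigma$ from $x$ to $y$. Granting this, and writing $\mathcal N_r(S)$ for the $r$-neighborhood of $S$, each side of a geodesic triangle satisfies $\sigma_{xy}\subseteq\mathcal N_\eta(A(x,y))\subseteq\mathcal N_{\eta+h}(A(x,z)\cup A(z,y))\subseteq\mathcal N_{2\eta+h}(\sigma_{xz}\cup\sigma_{zy})$, so geodesic triangles are $(2\eta+h)$-slim; converting a slim-triangles constant into a hyperbolicity constant by the standard explicit argument then gives $k$, and the bookkeeping of all these constants is exactly what produces the closure condition $2h(6+\log_2(m+2))\le m$ together with $k\ge\frac32 m-5h$ and $\eta\le m-4h$.

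I would prove the Hausdorff estimate as two one-sided inclusions. For \emph{preferred path near geodesic}, given $\sigma$ of length $n=d(x,y)$, I subdivide $\sigma$ dyadically and apply the triangle hypothesis at each level: after roughly $\log_2 n$ levels the sub-geodesics have length at most $1$, so by the edge hypothesis their associated subgraphs have diameter at most $h$ and sit within $h$ of $\sigma$, and summing the $h$ incurred per level gives $A(x,y)\subseteq\mathcal N_{h(1+\log_2 n)}(\sigma)$. The decisive refinement is not to subdivide down to scale $1$ but only to scale comparable with $m$: below that scale the contribution is folded into the constant being established. This is why the final bound carries $\log_2(m+2)$ in place of $\log_2 n$, and it forces the argument to be organized as a self-improving (bootstrap) estimate rather than a single computation.

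The reverse inclusion, \emph{geodesic near preferred path}, is the main obstacle. The tempting route --- project $A(x,y)$ to $\sigma$ by nearest points and argue the projection moves slowly --- is circular, because coarse Lipschitzness of nearest-point projection onto a geodesic is itself a manifestation of hyperbolicity, which is not yet available. Instead I would argue by extremality: let $D$ be the supremum over all triples $(x,y,\sigma)$ of the distance from a point of $\sigma$ to $A(x,y)$, pick a point $p\in\sigma$ nearly realizing it, cut $\sigma$ at $p$ into two geodesic halves, and feed these halves into the triangle hypothesis together with the first inclusion truncated at scale $m$. Connectedness of $A(x,y)$ and the fact that it contains both endpoints prevent the extremal configuration from escaping, and the resulting comparison yields a bound on $D$ in terms of $h$ alone, uniform in $n$. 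Making this comparison close at scale $m$, so that the $\log_2 n$ of the first inclusion collapses to the uniform $\log_2(m+2)$, is the delicate point and where I expect essentially all the difficulty to reside.

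With $\eta$ bounded purely in terms of $h$, the remaining steps are routine: the geodesic-triangle slimness constant is $2\eta+h$, and feeding it through the explicit slim-implies-hyperbolic estimate, while choosing $m$ minimal subject to $2h(6+\log_2(m+2))\le m$, yields the advertised $k\ge\frac32 m-5h$ and Hausdorff bound $m-4h$. In short, the proposition is a constant-tracked packaging of the principle ``slim preferred paths, together with their closeness to geodesics, imply hyperbolicity,'' and the only genuinely hard ingredient is the uniform, $n$-independent proof that geodesics stay near the preferred paths.
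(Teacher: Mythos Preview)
The paper does not prove this proposition at all: it is quoted verbatim as ``Proposition~3.1 in~\cite{Bowditch} (the `guessing geodesics lemma')'' and then \emph{applied}, with the remainder of the proof of Theorem~\ref{T:main} devoted solely to verifying the two hypotheses for $\mathcal C_{AL}$ with $h=2$. So there is no in-paper argument to compare your proposal against.

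That said, your outline is a fair high-level description of Bowditch's own argument: the dyadic subdivision to get $A(x,y)$ within $O(h\log_2 n)$ of a geodesic, the bootstrap that replaces $\log_2 n$ by $\log_2(m+2)$ once one truncates the subdivision at scale~$m$, and the transfer of slimness from $A$-triangles to geodesic triangles. Your identification of the reverse inclusion (geodesic near preferred path) as the genuinely delicate step is correct, and your warning against using nearest-point projection before hyperbolicity is established is apt. What your sketch does not do is carry out the constant-tracking that produces the specific inequality $2h(6+\log_2(m+2))\le m$ and the values $k\ge\frac32 m-5h$, $m-4h$; those come from the details of Bowditch's bootstrap and would need to be reproduced rather than asserted. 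If your goal was to supply what the paper omits, you would need to either reproduce Bowditch's computation in full or simply cite it, as the authors do.
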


We will show that the hypotheses of Proposition~\ref{P:GuessingGeodesics} are satisfied with $X=\mathcal C_{AL}$, and $h=2$. Then the inequality $2\cdot 2\cdot (6+\log_2(m+2))\leqslant m$ holds for the positive number $m=$46,{\small 5}. This yields the estimate $k=60$ and the statement about the unparametrized quasi-geodesic paths.

First we look at the first condition: preferred paths between adjacent vertices have uniformly bounded diameter in $\mathcal C_{AL}$.
\begin{lemma}\label{L:PreferredPathsNoLoops}
Let $v,w$ be two vertices of $\mathcal C_{AL}$ such that $d_{AL}(v,w)=1$. Then the diameter in $\mathcal C_{AL}$ of $A(v,w)$ is equal to~1. 
\end{lemma}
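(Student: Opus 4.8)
The plan is to unwind the definitions and reduce the claim to a short case analysis on the type of edge joining $v$ and $w$. By translating on the left (an isometry of $\mathcal C_{AL}$ that also carries preferred paths to preferred paths, by the very definition of $A(v,w)$ as a translate of $A(1,\cdot)$), I may assume $v=1$, so $\underline v=1$. Then $w=\underline w\Delta^{\mathbb Z}$ with $d_{AL}(1,w)=1$, and $A(1,w)$ is the path traced out by the left normal form of $\underline w$. The point is that $A(1,w)$ visits the vertices $(\Delta^i\wedge\underline w)\Delta^{\mathbb Z}$ for $i=0,\dots,\sup(\underline w)$, and each of these is obtained from the previous one by multiplying on the right by a simple element; so if I can show each such vertex is within distance $1$ of both $1$ and $w$ in $\mathcal C_{AL}$, the diameter bound follows (two vertices each within distance $1$ of $1$ are within distance $2$, but combined with being within distance $1$ of $w$ as well, and using that $1$ and $w$ themselves are at distance $1$, I should be able to pin the diameter of the whole path down to $1$).

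The two cases to treat are exactly the two edge types of Definition~\ref{D:AddLengthCx}. If the edge from $1$ to $w$ is of the first (``Bestvina'') type, then $\underline w$ is itself a simple element $m\neq 1,\Delta$, so its left normal form is the single letter $m$ and $A(1,w)$ is a single edge; its diameter is trivially~$1$. If the edge is of the second type, then $\underline w$ is an absorbable element $y$ with $\inf(y)=0$ — by Lemma~\ref{L:BasicAbsorb}(i) its left normal form is $y_1\cdots y_k$ with all $y_i$ simple. Here I need that each intermediate vertex $(\Delta^i\wedge y)\Delta^{\mathbb Z}=(y_1\cdots y_i)\Delta^{\mathbb Z}$ along $A(1,w)$ is joined by an edge both to $1$ and to $w$. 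Being joined to $1$: $y_1\cdots y_i$ is a positive subword of the absorbable element $y$, hence absorbable by Lemma~\ref{L:subword}, so there is a type-(2) edge from $1$ to $(y_1\cdots y_i)\Delta^{\mathbb Z}$ (or $i=0$, a trivial case). Being joined to $w$: the complementary piece $y_{i+1}\cdots y_k$ satisfies $(y_1\cdots y_i)\cdot(y_{i+1}\cdots y_k)=y$, and again $y_{i+1}\cdots y_k$ is a positive subword of $y$, hence absorbable by Lemma~\ref{L:subword}, giving a type-(2) edge from $(y_1\cdots y_i)\Delta^{\mathbb Z}$ to $w$.

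So every vertex on $A(1,w)$ is adjacent to both $1$ and $w$; since $1$ and $w$ are themselves adjacent, any two vertices on the path are at distance at most... I should be slightly careful here. Two intermediate vertices $p,q$ are each adjacent to $1$, which only gives $d_{AL}(p,q)\le 2$. To get diameter exactly $1$, I want each pair of consecutive vertices on the path to be adjacent (which they are, by construction of $A(1,w)$ via the normal form — consecutive vertices differ by right multiplication by one simple factor $y_{i+1}\ne 1$; I must check $y_{i+1}\ne\Delta$, which holds because $\sup(y)=k$ means no factor is $\Delta$), and more importantly each vertex adjacent to both endpoints. The cleanest finish: the subgraph consisting of $\{1,w\}\cup\{\text{vertices of }A(1,w)\}$ together with all these edges to $1$ and to $w$ has diameter $1$ once I observe every vertex is adjacent to the endpoint $1$ — wait, that still only gives $2$. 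I think the honest statement I can prove is that the diameter of $A(v,w)$ equals $1$ because in fact \emph{every} vertex of $A(1,w)$ other than the endpoints is adjacent to \emph{every} other vertex of $A(1,w)$: given intermediate vertices $(y_1\cdots y_i)\Delta^{\mathbb Z}$ and $(y_1\cdots y_j)\Delta^{\mathbb Z}$ with $i<j$, their ``difference'' $y_{i+1}\cdots y_j$ is a positive subword of $y$, hence absorbable by Lemma~\ref{L:subword}, giving a direct type-(2) edge between them. Combined with the endpoints being adjacent to everything (as shown above) and to each other, this gives that the graph on $A(1,w)$'s vertices is complete, so its diameter in $\mathcal C_{AL}$ is~$1$ (or $0$ if the path is a single vertex, which cannot happen since $v\ne w$). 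The main obstacle is really just keeping the bookkeeping of the two edge types straight and making sure the ``difference'' elements stay genuine subwords so that Lemma~\ref{L:subword} applies verbatim; once the translation-invariance reduction is in place, the rest is a direct application of Lemmas~\ref{L:BasicAbsorb} and~\ref{L:subword}.
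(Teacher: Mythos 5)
Your reduction to $v=1$, your treatment of type-(1) edges, and your key observation in the positive case --- that the ``difference'' $y_{i+1}\cdots y_j$ between any two vertices of $A(1,w)$ is a positive subword of $y=\underline w$, hence absorbable by Lemma~\ref{L:subword}, so that the vertices of $A(1,w)$ span a complete subgraph --- are all correct and are exactly what the paper does. But there is a genuine gap at the start of your type-(2) case: you assert that ``$\underline w$ is an absorbable element $y$ with $\inf(y)=0$''. Definition~\ref{D:AddLengthCx} only gives you \emph{some} absorbable $y$ with $y\in\underline w\Delta^{\mathbb Z}$, i.e.\ $y=\underline w\Delta^{k}$, and Definition~\ref{D:Absorbable} allows $\sup(y)=0$ instead of $\inf(y)=0$. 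So either $k=0$ and $y=\underline w$ (your case), or $k=-\sup(\underline w)$ and $y$ is the \emph{negative} representative of the coset. In the latter case $\underline w=\partial(y^{-1})$ is the complement of an absorbable element, and by Example (4)/(5) of Section~\ref{SS:Absorbable} such a complement need \emph{not} be absorbable (e.g.\ $\sigma_i^{-1}\in B_n$ is absorbable but $\sigma_i^{-1}\Delta=\partial\sigma_i$ is not). Your argument via positive subwords of $\underline w$ then has nothing to apply Lemma~\ref{L:subword} to, and the whole case is untreated.

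The paper closes this case as follows: by Lemma~\ref{L:Symmetry}, $A(1,w)=A(w,1)$, and $A(w,1)$ is the $\underline w$-translate of $A(1,\underline w^{-1}\Delta^{\mathbb Z})$, which is traced out by the left normal form of $\partial\underline w=\tau^{-k}(y^{-1})$. Since $y$ is absorbable, so are $y^{-1}$ and $\tau^{-k}(y^{-1})$ by Lemma~\ref{L:BasicAbsorb}(ii); now $\partial\underline w$ \emph{is} a positive absorbable element, and your subword argument (Lemma~\ref{L:subword}) applies verbatim to it, reading the path from the other end. Adding this second case --- and the symmetry lemma needed to switch endpoints --- would complete your proof.
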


\begin{proof}
We may assume that $v=1$. 
If $\sup(\underline w)=1$, then there is nothing to prove: $A(1,w)$ just consists of an edge with two vertices. 
Otherwise, $\sup(\underline w)>1$. As there is an edge between 1 and $w$, there exists an absorbable element $y$ so that 
$y=\underline w\Delta^{k}$, for some $k\in \Z$. By definition of absorbable elements, 
this implies either $k=0$ (in which case $y=\underline w$ is absorbable and positive), or $k=-\sup(\underline w)$. 
In the first case, $A(1,w)$ is given by the left normal form of $y=\underline w$; by Lemma~\ref{L:subword}, this has diameter 1 in $\mathcal C_{AL}$. In the second case, we look at the path $A(w,1)$ which is the translate by $\underline w$ of the path 
$A(1,\underline w^{-1}\Delta^{\Z})$. The latter corresponds to the left normal form of the element $\partial \underline w$. 
But~$y$, and thus $y^{-1}$, are absorbable; and the equality $\partial \underline w=\tau^{-k}(y^{-1})$ shows that $\partial \underline w$ is also absorbable. Therefore, again by Lemma~\ref{L:subword}, the path $A(1,\underline w^{-1}\Delta^{\Z})$ has diameter~1 in $\mathcal C_{AL}$ as we needed to show.
\end{proof}

We now proceed to show the second condition: the 2-thinness of any triangle whose edges are our preferred paths. 

\begin{lemma}\label{L:2-thinness}
Let $u,v,w$ be three vertices of $\mathcal C_{AL}$. The triangle in $\mathcal C_{AL}$ with vertices $u$, $v$ and $w$, and with edges $A(u,v)$, $A(v,w)$ and $A(u,w)$ is $2$-thin: each edge is at Hausdorff distance at most 2 from the union of the other two edges.
\end{lemma}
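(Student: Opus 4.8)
The plan is to exploit the structure provided by Lemma~\ref{L:pgcd}: each preferred path $A(x,y)$ factors through the vertex $g_{xy}:=(\underline x\wedge\underline y)\Delta^{\Z}$, so a preferred-path triangle on $u,v,w$ is really built out of six geodesic-in-the-Cayley-graph sub-paths, running between the three vertices and their three pairwise gcd-vertices. After translating so that $\underline v\wedge\underline w=1$ (i.e.\ the representatives are left-coprime), I would introduce the notation $\underline v=db$, $\underline w=dc$ with $d=\underline v\wedge\underline w$ — but since we can normalise, in the key case we have $d=1$. The first step is therefore purely Garside-combinatorial: compute $\underline u\wedge\underline v$, $\underline u\wedge\underline w$, and relate them to $\underline v\wedge\underline w$ and to $\underline u$.

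The heart of the argument should be: it suffices to show that the edge $A(v,w)$ lies in a $2$-neighbourhood of $A(u,v)\cup A(u,w)$. Writing $e=\underline v\wedge\underline w$ (after normalising, $e$ can be taken trivial), the path $A(v,w)$ splits at $e\Delta^{\Z}$ into a descending half (the left normal form of $\partial b$, read from $v$ down to $e\Delta^{\Z}$, up to a $\tau$-twist) and an ascending half ($\tau$-twisted left normal form of $c$, from $e\Delta^{\Z}$ up to $w$). I would treat a vertex $p$ on, say, the $v$-to-$e\Delta^{\Z}$ half; such a vertex has distinguished representative of the form $\Delta^i\wedge\underline v$ up to the $\Delta^{\Z}$-coset, i.e.\ it is represented by a prefix of $\underline v$. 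The claim to establish is that any prefix $p$ of $\underline v$ (equivalently, any vertex on $A(v, e\Delta^{\Z})$ or symmetrically $A(1,v)$) is within $d_{AL}$-distance $2$ of $A(u,v)\cup A(u,w)$. Here is where I would use the gcd-vertex $g=(\underline u\wedge\underline v)\Delta^{\Z}$, which lies on $A(u,v)$ by Lemma~\ref{L:pgcd}: the vertex $g$ is represented by $\underline u\wedge\underline v$, a prefix of $\underline v$. If the prefix $p$ is itself a prefix of $\underline u\wedge\underline v$ then $p$ lies on $A(1,g)\subseteq A(u,v)$ and there is nothing to prove. Otherwise $\underline u\wedge\underline v$ is a proper prefix of $p$, and I must connect $p$ to the rest of the triangle.

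The crucial sub-claim is then that the ``overshoot'' element $q:=(\underline u\wedge\underline v)^{-1}\cdot(\Delta^i\wedge \underline v)$ — the positive element taking you from the gcd-vertex~$g$ out along $A(1,v)$ to the vertex $p$ — is \emph{absorbable}, or at least a product of at most two absorbable elements (hence $d_{AL}(g,p)\le 2$). To see absorbability I would argue that $q$ is a positive subword of an absorbable element and invoke Lemma~\ref{L:subword}: the point is that $\underline u\wedge\underline v$ and $\underline u\wedge\underline w$ cannot both be ``large'' relative to $\underline u$ once $\underline v\wedge\underline w=1$, because a common prefix of $\underline u$ that survives in both $\underline v$ and $\underline w$ would be a common prefix of $\underline v$ and $\underline w$; more precisely $( \underline u\wedge\underline v)\wedge(\underline u\wedge\underline w)=\underline u\wedge\underline v\wedge\underline w=1$. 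This coprimality forces the relevant overshoot to be swallowed: one shows $\underline u$ itself (or a suitable conjugate/complement thereof) absorbs $q$, by checking the $\inf$/$\sup$ conditions of Definition~\ref{D:Absorbable} using the additivity inequalities for $\inf,\sup$ under products, exactly in the style of the proofs of Lemmas~\ref{L:subword} and~\ref{L:absorblength}. Thus $p$ is joined to $g\in A(u,v)$ by an edge of type~(2) (or two such edges), giving $d_{AL}(p, A(u,v))\le 2$; the ascending half is handled symmetrically using $\underline u\wedge\underline w$ and $A(u,w)$, and the general case reduces to the coprime case by left-translation and the behaviour of $\wedge$ under the $\Delta^{\Z}$-cosets.

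The main obstacle I anticipate is precisely pinning down that overshoot-absorbability claim and the bookkeeping with the automorphism $\tau$ and the $\Delta$-powers: one has to be careful that ``prefix of $\underline v$'' vertices along $A(v,e\Delta^{\Z})$ are described by $\Delta^i\wedge\underline v$ only up to $\Delta^{\Z}$, and that $\partial$ and $\tau$ twists in Lemma~\ref{L:pgcd} do not disturb absorbability — but Lemma~\ref{L:BasicAbsorb}(ii) says absorbability is preserved by $\tau$ and by inversion, which should absorb (so to speak) all of that. A secondary point is making sure the bound is genuinely $2$ and not $3$: one must verify that a single overshoot past the gcd-vertex is one absorbable element, not a product of two, in the relevant configuration — this should follow because that overshoot has $\inf=0$ by construction, so only the positive-part case of the remark after Definition~\ref{D:Absorbable} arises, and Lemma~\ref{L:subword} then gives a genuine absorbable element and hence a single type-(2) edge.
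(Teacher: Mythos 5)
Your overall plan --- reduce to showing that one edge lies in a $2$-neighbourhood of the union of the other two, split preferred paths at gcd-vertices via Lemma~\ref{L:pgcd}, and manufacture type-(2) edges from positive ``overshoots'' whose $\inf$ and $\sup$ match those of an absorbing prefix --- starts in the right place, but the central step is false. You claim that every vertex $p$ on the $v$-to-$(\underline v\wedge\underline w)\Delta^{\Z}$ half of $A(v,w)$ satisfies $d_{AL}(p,g)\leqslant 2$ for the \emph{single} vertex $g=(\underline u\wedge\underline v)\Delta^{\Z}$, by absorbing the overshoot $q=(\underline u\wedge\underline v)^{-1}(\Delta^i\wedge\underline v)$. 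Take $w=1\Delta^{\Z}$ and let $u$ be a vertex far out on $A(1\Delta^{\Z},v)$, so that $\underline u\preccurlyeq\underline v$ and $g=u$: your claim would force the whole of $A(v,1\Delta^{\Z})$ to lie within distance $2$ of the single point $u$, which is contradicted, for instance, by $\underline v=x_n^{2N}$, $\underline u=x_n^N$ and the Claim in the proof of Theorem~\ref{T:InfDiam}. The correct assertion is that $p$ is close to a \emph{$p$-dependent} vertex of $A(u,v)$ or of $A(u,w)$, not to the gcd-vertex; correspondingly, the overshoot $q$ has no reason to be absorbable, because the mechanism you are reaching for (absorbability of $c^{-1}c'$ by $c$ for prefixes $c\preccurlyeq c'$ of $\underline v$) requires $\inf(c)=\inf(c')$ \emph{and} $\sup(c)=\sup(c')$, i.e.\ that $c$ and $c'$ sit at the same height $i$, which fails as soon as $\ell(q)>0$. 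Two smaller but real errors: $\underline u\wedge\underline v$ and an arbitrary prefix of $\underline v$ need not be $\preccurlyeq$-comparable, so your dichotomy is not exhaustive; and $A(1,g)$ is not contained in $A(u,v)$ --- by Lemma~\ref{L:pgcd} the latter is $A(u,g)\cup A(g,v)$.

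What is missing are exactly the two lemmas the paper proves inside Lemma~\ref{L:2-thinness}. First, a fellow-travelling statement at each corner (Lemma~\ref{L:InitialSegments}): with $u=1$, $d=\underline v\wedge\underline w$ and $p=\sup(d)$, for each $i\leqslant p$ the $i$-th vertex of $A(1,v)$ is one type-(2) edge away from the $i$-th vertex of $A(1,d\Delta^{\Z})$, because $(d\wedge\Delta^i)^{-1}(\underline v\wedge\Delta^i)$ is absorbed by $d\wedge\Delta^i$ --- here both prefixes have infimum $0$ and supremum $i$, which is precisely the matching your overshoot lacks; hence the length-$p$ initial segments of $A(1,v)$ and $A(1,w)$ are at Hausdorff distance at most $2$, vertex by vertex. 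Second, a covering statement (Lemma~\ref{L:Overlap}): the distinguished initial segments of the edge $A(1,v)$ issuing from its two endpoints together cover it, because $\sup(\partial\underline v\wedge\partial a\cdot\tau^r(b))\geqslant r=\sup(a)$ while $\sup(\underline v)\leqslant\sup(d)+\sup(a)$. Without a covering statement of this kind, fellow-travelling near the corners says nothing about the middle of an edge, and no argument that only compares points to the gcd-vertices can close that gap.
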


\begin{proof}[Proof of Lemma~\ref{L:2-thinness}]
For the proof, first notice that without loss of generality we can assume that $u=1$. We then set, as in the above proofs, $d=\underline v\wedge \underline w$. We consider the elements $a,b$ of $G$ satisfying $\underline v=da$, $\underline w=db$ and $a\wedge b=1$. 
We also set $k=\sup(\underline v)$, $l=\sup(\underline w)$, $r=\sup(a)$, $s=\sup(b)$ and $p=\sup(d)$. 

\begin{figure}[htb]
\begin{center}\includegraphics[width=13cm]{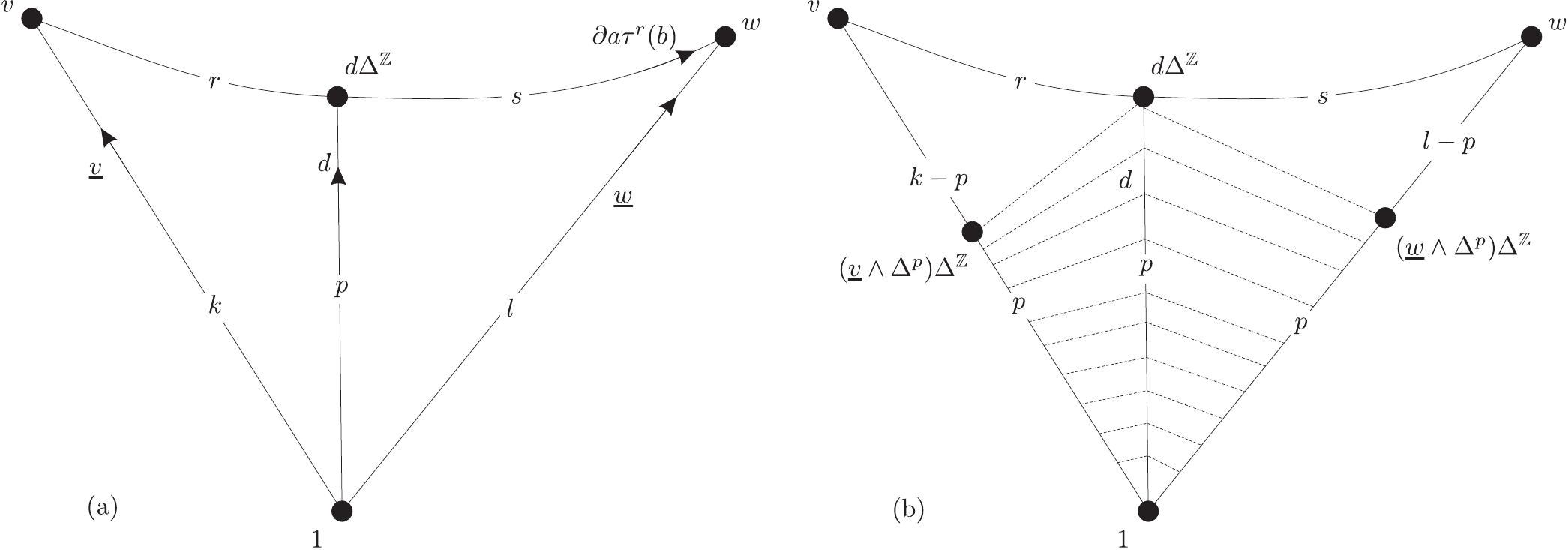}
\end{center}
\caption{(a) A triangle with vertices $1$, $v$ and $w$ and normal form edges. \ (b) How the triangle is squashed in $\mathcal C_{AL}$.}
\label{F:2-thinness}
\end{figure}

\begin{lemma}\label{L:InitialSegments}
The initial segments of length $p$ of $A(1,v)$ and $A(1,w)$ are at Hausdorff distance at most 2 in $\mathcal C_{AL}$.
\end{lemma}
\begin{proof}
First recall that for any integer $i=1,\ldots,k$,
the $i$th step on the preferred path $A(1,v)$ is at the vertex $(\underline v\wedge \Delta^{i})\Delta^{\Z}$, whose distinguished representative is exactly $\underline v\wedge \Delta^i$. Notice that $d$ itself is the distinguished representative of $d\Delta^{\mathbb Z}$.

It is sufficient to prove that the initial segment of $A(1,v)$ of length $p$ is at Hausdorff distance at most 1 from $A(1,d\Delta^{\Z})$ in $\mathcal C_{AL}$. 
Specifically, we claim that the respective $i$th steps of $A(1,v)$ and of $A(1,d\Delta^{\Z})$ are at distance at most 1 for any $i=1,\ldots p$, that is 
$$d_{AL}((\underline v \wedge \Delta^i)\Delta^{\mathbb Z},(d\wedge \Delta^i)\Delta^{\mathbb Z})\leqslant1.$$
But now observe that $d\wedge \Delta^i\preccurlyeq \underline v\wedge \Delta^i$, so that we can find a positive element~$y$ such that $(d\wedge \Delta^i)y=\underline v \wedge \Delta^i$. This element $y$ is absorbable by $d\wedge \Delta^i$ as $\sup(d\wedge \Delta^i)=\sup(\underline v \wedge \Delta^i)=i$ and $\inf(d\wedge \Delta^i)=\inf(\underline v \wedge \Delta^i)=0$. This shows the claim. 
\end{proof}

Lemma~\ref{L:InitialSegments} says that in our triangle, the two edges emanating from any vertex have distinguished initial segments (possibly consisting of a single vertex) which stay at distance at most 2 from each other; moreover, the respective end points of these initial segments are at distance at most 1 from a common vertex on the third edge. 

We shall now see that for each edge of our triangle, the respective distinguished initial segments emanating from its two extremities actually overlap (or at least share a common vertex on the given edge). This is a consequence of the following lemma. 

\begin{lemma}\label{L:Overlap}
We have $\sup(\partial \underline v\wedge \partial a\cdot \tau^r(b))\geqslant r$.
\end{lemma}

\begin{proof}
It suffices to exhibit a common prefix of $\partial \underline v$ and $\partial a$ of length $r$.
Our candidate is $U$, which we define to be the product of the $r$ first factors in the \emph{right} normal form of $\partial \underline v$. 
In other words, we have $U=\partial(\Delta^r\wedge^{\hspace{-0.7mm}\Lsh\hspace{0.7mm}} \underline v)$ (where $\wedge^{\hspace{-0.7mm}\Lsh\hspace{0.7mm}}$ denotes the right gcd in~$G$). 
It is by construction a prefix of $\partial \underline v$ of length $r$. It remains to be shown that it is also a prefix of $\partial a$. 
But notice that $a$, as a suffix of $\underline v$ of length r, is certainly a suffix of $\Delta^r\wedge^{\hspace{-0.7mm}\Lsh\hspace{0.7mm}} \underline v$, so that we can find a positive $R$ satisfying $\Delta^r\wedge^{\hspace{-0.7mm}\Lsh\hspace{0.7mm}} \underline v=Ra$. But now, $$\partial a=a^{-1}\Delta^r=(\Delta^r\wedge^{\hspace{-0.7mm}\Lsh\hspace{0.7mm}} \underline v)^{-1}R\Delta^r=U\tau^r(R).$$ This shows that $U$ is also a prefix of $\partial a$.
\end{proof}

Along the edge $A(1,v)$, we have on the one hand a distinguished initial segment emanating from 1 which has length $p$. 
On the other hand, Lemma \ref{L:Overlap} says that the distinguished initial segment emanating from $v$ has length at least $r$. Because $\underline v=da$, the length $k$ of the edge $A(1,v)$ is at most $p+r$.
Hence the two distinguished initial segments at least meet in a point along $A(1,v)$. 

This shows that any point of $A(1,v)$, and hence by symmetry any point on any edge of our triangle, is at distance at most 2 from some point in the union of the other two edges.
\end{proof}

Lemmas~\ref{L:PreferredPathsNoLoops} and~\ref{L:2-thinness} guarantee that the hypotheses of the Guessing Geodesics Lemma~\ref{P:GuessingGeodesics} are satisfied. This completes the proof of Theorem~\ref{T:main}. 
\end{proof}

\begin{openproblems}
\begin{enumerate}
\item What is the boundary at infinity of~$\mathcal C_{AL}(G)$?

\item One of the most powerful tools for studying mapping class groups are \emph{subsurface projections} in curve complexes \cite{MM2}. Is there a good analogue notion in $\mathcal C_{AL}$?

\item\label{Q:InfDiam} Under which conditions on~$G$ does $\mathcal C_{AL}(G)$ have infinite diameter? In Theorem~\ref{T:InfDiam} we shall prove that this is the case if $G$ is the braid group, equipped with the classical Garside structure; however, the condition that $G/Z(G)$ is infinite may actually be sufficient. The special case of Artin-Tits groups of spherical type deserves particular attention.

\item\label{Q:WPD} Does $G$ act acylindrically on~$\mathcal C_{AL}(G)$? Recall that mapping class groups act acylindrically on curve complexes~\cite{Bowditch08,PrzSisto}.

\item Is it true that ``generic'' elements of~$G$ act loxodromically on~$\mathcal C_{AL}$, and thus are analogue to pseudo-Anosov elements in mapping class groups?  
If the word ``generic'' is used in the sense of ``a random element in a large ball in the Cayley graph'', then the answer is positive in the special case of braid groups with the classical Garside structure -- see~\cite{CarusoWiestGeneric2,WAutomLoxGeneric}. The question is closely related to question~(\ref{Q:InfDiam}) above. 
If, by contrast, the word ``generic'' is used in the sense of ``the result of a long random walk in the Cayley graph'', then a positive answer would essentially be implied by a positive answer to question~(\ref{Q:WPD}) above, using~\cite{SistoGeneric}.

\item Consider the braid group $B_n$, equipped with its classical Garside structure. Is it true that $\mathcal C_{AL}(B_n)$ is quasi-isometric to $\mathcal{CC}(D_n)$, the curve complex of the $n$-times punctured disk (see Section~\ref{SS:QiWithCC})? (This conjecture is the reason why we think of $\mathcal C_{AL}$ as an analogue of the curve complex.)

\item If $G$ is a Garside group with two different Garside structures $(G,P,\Delta)$ and $(G,Q,\delta)$, are the additional length complexes $\mathcal C_{AL}(G,P,\Delta)$ and $\mathcal C_{AL}(G,Q,\delta)$ quasi-isometric? In particular, are the additional length complexes associated with the classical (respectively, dual) Garside structure of the braid group $B_n$ quasi-isometric? We conjecture that they are, since both should be quasi-isometric to $\mathcal{CC}(D_n)$. 
\item Is the automorphism group of~$\mathcal C_{AL}(G)$ commensurable with~$G$? Recall that the automorphism group of the curve complex is commensurable with the mapping class group, by Ivanov's theorem~\cite{Ivanov}.

\item Is there a fast algorithm for finding parametrized quasi-geodesics, or even geodesics, between any two given points in~$\mathcal C_{AL}$? (Note that the Garside normal form yields a fast algorithm for constructing \emph{unparametrized} quasi-geodesics.) To start with, is there a fast algorithm for deciding absorbability?

\item Is the construction principle of $\mathcal{C}_{AL}$ useful in contexts other than Garside groups, for instance for general mapping class groups, or for $\mathrm{Out}(F_n)$?
\end{enumerate}
\end{openproblems}


\section{The special case of the braid groups}\label{S:B_n}

Throughout this section we consider the special case where $G=B_n$, the braid group on $n$~strands, equipped with the classical Garside structure. For an excellent introduction to this structure, see~\cite{Elrifai-Morton}.

\begin{example}\label{E:InfDiam}
\begin{enumerate}
\item For $n=2$, $B_2$ is the infinite cyclic group generated by $\Delta_2=\sigma_1$, so $B_2/\langle\Delta_2\rangle$ as well as the associated additional length complex are trivial.
\item For $n=3$, the only absorbable braids are $\sigma_1$, $\sigma_2$ and their respective inverses. 
Therefore the additional length complex in that special case is nothing but Bestvina's normal form complex, which has infinite diameter.
\end{enumerate}
\end{example}


\subsection{Periodic and reducible braids act elliptically}\label{SS:PeriodReduc}

\begin{example}\label{E:DecompositionDelta} In $B_n$ with $n\geqslant 4$, the braid $\Delta^{k}$ (with $k\in \mathbb Z-\{0\}$) is the product of three absorbable braids. Indeed, suppose $k\geqslant 1$ and let $A=\sigma_1^k$, $B=\sigma_3^k$, and $C=A^{-1}B^{-1}\Delta^k$. Then $\Delta^k=A\cdot B\cdot C$. Moreover, $A$ and~$B$ can absorb each other, and $C$ can be absorbed by~$A$. It follows from Lemma \ref{L:BasicAbsorb}(ii) that $A^{-1}$, $B^{-1}$ and $C^{-1}$ are absorbable. Thus $\Delta^{-k}=C^{-1}B^{-1}A^{-1}$ is the product of three absorbable braids, hence showing the claim for negative powers, too. 
\end{example}

Recall that the braid group acts, on the left, on the set of isotopy classes of simple closed curves in the $n$-times punctured disk. In what follows, we shall take these punctures to be lined up horizontally. Also, by a \emph{round} curve we shall mean the isotopy class of a geometric essential circle (i.e. enclosing more than 1 and less than $n$ punctures). 

\begin{lemma}\label{L:ReducibleSmallDistance}
Suppose that $n\geqslant 4$. Any $n$-braid which sends a round curve to a round curve is a product of at most nine absorbable braids. In particular, every reducible braid with round reduction curves is a product of at most nine absorbable braids.
\end{lemma}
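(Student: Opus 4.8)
The plan is to reduce an arbitrary ``round-curve to round-curve'' braid to a bounded product of the building blocks we already understand, namely powers of~$\Delta$ (which, by Example~\ref{E:DecompositionDelta}, are products of three absorbable braids) and the periodic braids that fix a round curve setwise. More precisely, if $\beta$ sends a round curve $c$ to a round curve $c'$, I would first observe that both $c$ and $c'$ are obtained from a \emph{standard} round curve (one enclosing punctures $1,\ldots, m$ for some $m$) by a suitable braid: there is a braid $\alpha_1$ with $\alpha_1.c_{\mathrm{std}}=c$ and a braid $\alpha_2$ with $\alpha_2.c''_{\mathrm{std}}=c'$, where $c_{\mathrm{std}}$ encloses $m$ punctures and $c''_{\mathrm{std}}$ encloses $m'$ punctures; since $\beta$ maps $c$ to $c'$, a round curve homeomorphic type argument shows $m=m'$. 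Then $\alpha_2^{-1}\beta\alpha_1$ sends $c_{\mathrm{std}}$ to itself, i.e.\ it lies in the stabilizer of a standard round curve. The point is that these $\alpha_i$ can be taken to be \emph{absorbable}, or at worst products of a bounded number of absorbable braids: a braid taking the standard round curve enclosing $m$ punctures to a given round curve enclosing $m$ punctures can be realized by ``sliding'' the disk bounded by the curve, and such sliding braids have small canonical length / are built from bounded products of absorbable pieces. This is the step I expect to require the most care.

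Next I would analyze the stabilizer of a standard round curve $c_{\mathrm{std}}$ enclosing $m$ punctures (with $2\leqslant m\leqslant n-1$). Such a stabilizer is generated by: (i) braids supported inside the subdisk bounded by $c_{\mathrm{std}}$, which form a copy of $B_m$; (ii) braids supported in the complementary subdisk (containing the curve as one ``puncture'' together with the remaining $n-m$ punctures), forming a copy of $B_{n-m+1}$; and (iii) a ``rotation'' of the inner disk, i.e.\ a power of the half-twist $\Delta_m^2$ on the inner strands, which is already absorbed into (i). The key sub-claims to establish are that the generators of the inner $B_m$ and the outer $B_{n-m+1}$ — which one can take to be the band/Artin generators $\sigma_i$ restricted to the appropriate strands — are themselves absorbable in the \emph{ambient} $B_n$, by essentially the same mechanism as in Example (2) of the excerpt (a single $\sigma_i$ is absorbable, and more generally short positive words living ``away from the other punctures'' are absorbable because one can pad with a suitable $x$ whose normal form has the same infimum and supremum). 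But a general element of the stabilizer is a long product of such generators, so to get a \emph{bounded} count one cannot simply multiply them; instead one must use that $c_{\mathrm{std}}\mapsto c_{\mathrm{std}}$ means the element already maps the \emph{base} round curve to a round curve, hence by the action of $B_n$ on $\mathcal C_{AL}$ one only needs to move the \emph{vertex} $c_0\Delta^{\mathbb Z}$ a bounded distance, and the genuine content is that $c_0$ and $\beta.c_0$ are joined by a bounded number of absorbable edges. So the real statement to prove is geometric: \emph{any two round curves are at $\mathcal C_{AL}$-distance at most (some constant) apart}, via absorbable-element edges.

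So the cleaner route, which I would actually pursue, is: (1) show the standard round curve $c_0=\sigma_1\sigma_2\cdots\sigma_{?}$-type curve is at bounded $\mathcal C_{AL}$-distance from the identity vertex, using that the braid realizing it is a product of a bounded number of absorbable braids (here ``bounded'' means independent of the data, giving the number $9$ after bookkeeping); (2) show $\mathrm{GL}$-invariance, i.e.\ for any round curve $c$ there is an absorbable (or boundedly-many-absorbable) braid $\gamma$ with $\gamma.c_0=c$, so every round curve is within a bounded distance of the base vertex; (3) combine via the triangle inequality: $d_{AL}(c, c') \leqslant d_{AL}(c,c_0)+d_{AL}(c_0,c')$, and then $\beta$ sending $c$ to $c'$ with $c,c'$ round means $\beta.c_0$ is within bounded distance of $c_0$ (after composing with the boundedly-many-absorbable braids that move $c_0$ to $c$ and $c'$ back to $c_0$), whence $\beta$ is a product of at most nine absorbable braids. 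The bookkeeping $3+3+3 = 9$ strongly suggests the intended argument writes $\beta = \gamma_2 \cdot \delta \cdot \gamma_1$ where $\gamma_1,\gamma_2$ are each products of three absorbable braids (moving standard round curves to the given ones) and $\delta$ is a product of three absorbable braids lying in the stabilizer of a standard curve (e.g.\ a $\Delta$-power on the inner disk, handled by Example~\ref{E:DecompositionDelta}, plus inner/outer Garside generators). The main obstacle, as flagged, is constructing the ``moving'' braids $\gamma_i$ and verifying that each is a product of exactly three absorbable braids rather than some unbounded number: this is where one must use the specific combinatorics of round curves in the punctured disk (e.g.\ that a round curve enclosing $m$ consecutive-looking punctures is obtained from a standard one by a braid of a very constrained form, e.g.\ a product of at most two ``comb'' braids, each absorbable, together with a $\Delta$-power correction), and make the decomposition uniform in $n$, in the number of enclosed punctures, and in the curve.
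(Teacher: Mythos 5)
There is a genuine gap, and you have in fact located it yourself without closing it. The crux of the lemma is not moving between round curves (all round curves in $D_n$ are ``standard'' up to which consecutive punctures they enclose, and this part is cheap); it is that an \emph{arbitrarily long} positive element of the stabilizer of a round curve must be written as a \emph{bounded} product of absorbable braids. You correctly observe that one ``cannot simply multiply'' the absorbabilities of the individual generators, but your proposed repair --- reduce to the claim that ``any two round curves are at bounded $\mathcal C_{AL}$-distance apart via absorbable edges'' --- is circular: the vertices of $\mathcal C_{AL}$ are cosets $g\Delta^{\mathbb Z}$, not curves, and once translated correctly that claim is essentially the lemma you are trying to prove. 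The missing ingredient is a mechanism that absorbs a whole unbounded-length braid in one step. The paper's proof supplies exactly this: after using Example~\ref{E:DecompositionDelta} to strip off a $\Delta$-power (cost $3$) and reduce to a positive braid $y$ preserving roundness of $\mathcal C$, it decomposes $y=y_{\rm int}\cdot y_{\rm tub}$ into its interior and tubular parts along the round tube, and then shows that each of these two factors, \emph{regardless of its length}, splits into at most three pieces each absorbable by a single explicitly chosen element --- typically a power $\sigma_j^{\sup(\cdot)}$ of a generator whose two strands lie entirely outside the support of the piece, so that prepending $\sigma_j$ to every normal-form factor keeps the word left-weighted and leaves $\inf$ and $\sup$ unchanged. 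That disjoint-support absorption trick is what converts ``long product of generators'' into ``one absorbable braid,'' and it is absent from your proposal; without it neither your stabilizer analysis (sub-claims (i)--(iii)) nor your ``moving braids'' $\gamma_i$ can be bounded. The $3+3+3=9$ bookkeeping is right in shape, but the three blocks are $\Delta$-power, interior braid, tubular braid --- not mover, stabilizer element, mover.
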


\begin{proof} 
Let $y$ be a braid sending a round curve to a round curve. As in the computation of a left normal form, we can get rid of the possible negative factors in $y$ at the cost of at most three absorbable braids (see Example \ref{E:DecompositionDelta}). As powers of $\Delta$ send round curves to round curves we may suppose that $y$ is a positive braid sending the round curve $\mathcal C$ to a round curve.  

We recall~\cite{BGN,CalvezStandard,GonzalezMenesesRed} that in any braid~$y$ which sends a round curve~$\mathcal C$ to a round curve, pushing the curve~$\mathcal C$ along the braid gives rise to a ``tube'' that stays round all along the braid~$y$. Thus $y$ can be written as the product $y=y_{\rm int}\cdot y_{\rm tub}$ of an interior braid $y_{\rm int}$ and a tubular braid $y_{\rm tub}$: in the interior braid the tube just goes straight down and only the strands inside the tube can cross each other. By contrast, the tubular braid~$y_{\rm tub}$ looks just like~$y$, except that all crossings between pairs of strands living in the tube have been removed. 
Figure \ref{F:RedAbsorbable} shows an example in $B_5$. 

We shall show that each of $y_{\rm int}$ and $y_{\rm tub}$ can be written as a product of three absorbable braids. We start with some notation. Firstly, we denote by $\Delta_\mathcal C$ the simple braid in which two strands cross if and only if they both start at punctures enclosed by $\mathcal C$.  
Secondly, let $i$ be an integer such that punctures number $i$ and~$i+1$ are enclosed by~$\mathcal C$.

In order to prove the claim concerning $y_{\rm int}$, let us first
suppose that $\mathcal C$ encloses strictly less than $n-1$ punctures, thus at least two punctures are not enclosed by $\mathcal C$. If there is a $j$ such that the punctures $j$ and $j+1$ are not enclosed by $\mathcal C$, then $y_{\rm int}$ can be absorbed by 
an appropriate power of $\sigma_j$ (namely, $\sup(y_{\rm int})$). Otherwise, only the first and the $n$th puncture are not enclosed by $\mathcal C$; then 
there is an appropriate value of $p$ (namely, $\sup(y_{\rm int})$) so that $\prod_{\iota=1}^{p}\tau^{\iota}(\sigma_1\ldots \sigma_{n-1})$ absorbs $y_{\rm int}$. 

\begin{figure}[htb]
\begin{center}\includegraphics[width=9cm]{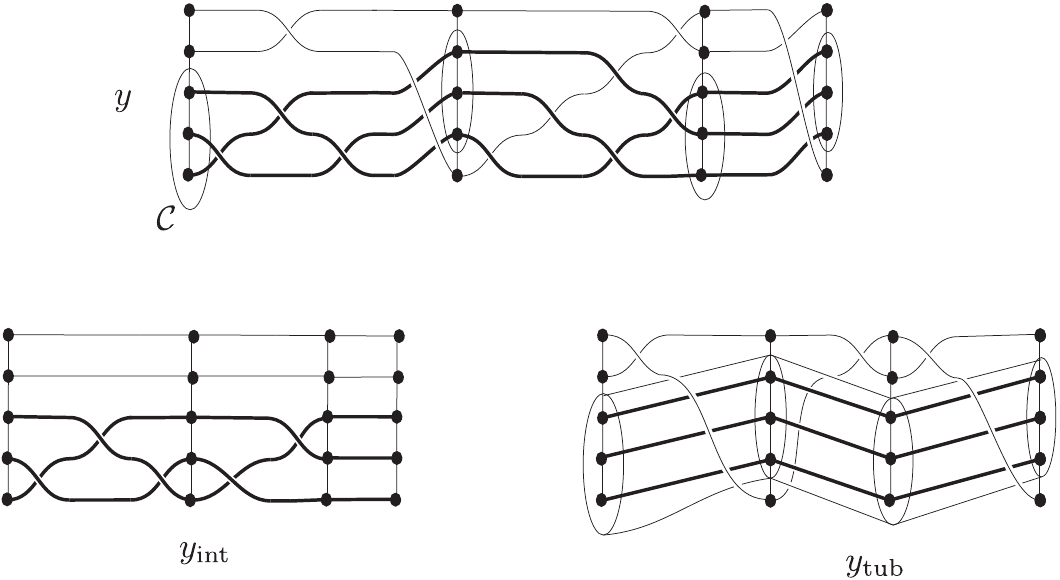} \end{center}
\caption{The braid $y=\sigma_1\sigma_2\sigma_1\sigma_4\sigma_3\sigma_2\sigma_1\cdot \sigma_1\sigma_2\sigma_1\sigma_3\sigma_2\sigma_4\cdot \sigma_4\sigma_3\sigma_2\sigma_2\sigma_1\in B_5$, the round curve $\mathcal C$ sent by $y$ to a round curve and the corresponding braids $y_{\rm int}=\sigma_1\sigma_2\sigma_1\cdot \sigma_1\sigma_2$ and $y_{\rm tub}=\sigma_4\sigma_3\sigma_2\sigma_1\cdot\sigma_1\sigma_2\sigma_3\sigma_4\cdot\sigma_4\sigma_3\sigma_2\sigma_1$; interior strands are depicted in bold lines. In this example, $y_{\rm int}$ is absorbable by $\sigma_4^2$. On the other hand, with $i=1$, $\sigma_i^3$ absorbs $y_{\rm tub}$.}
\label{F:RedAbsorbable}
\end{figure}

Suppose now that $\mathcal C$ encloses all the punctures but one. Up to conjugation by $\Delta$, which preserves absorbability (Lemma \ref{L:BasicAbsorb}(ii)), we may assume that the first puncture is not enclosed by~$\mathcal C$. We consider the decomposition $y_{\rm int}=\Delta_{\mathcal C}^k\cdot y'_{\rm int}$, where $k$ is a non-negative integer and $y'_{\rm int}$ is a positive braid not divisible by $\Delta_{\mathcal C}$. 
Then there is an appropriate value of $p$ (namely $\sup(y'_{\rm int})$) so that $y'_{\rm int}$ is absorbed by $\prod_{\iota=1}^{p} \tau^{p-\iota}(\sigma_{n-1}\ldots \sigma_1)$. 
The factor~$\Delta_{\mathcal C}^k$, on the other hand, can be further decomposed as $\Delta_{\mathcal C}^k=\sigma_i^k\cdot (\sigma_i^{-k}\Delta_{\mathcal C}^k)$. Both factors are absorbable by $\prod_{\iota=1}^{k} \tau^{k-\iota}(\sigma_{n-1}\ldots \sigma_{1})$. This completes the proof that $y_{\rm int}$ can be written as a product of three absorbable braids.

The proof for $y_{\rm tub}$ is similar: the braid $y_{\rm tub}$ can be decomposed into at most three factors which can all be absorbed by an appropriate power of~$\sigma_i$.\end{proof}

\begin{proposition}\label{P:PeriodReducActEllipt}
We consider the action of the braid group $B_n$, equipped with its classical Garside structure, on its additional length complex $\mathcal C_{AL}(B_n)$ by left multiplication. Then periodic and reducible elements act elliptically.
\end{proposition}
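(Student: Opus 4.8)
The plan is to show that periodic and reducible braids each fall into one of two cases already handled, up to conjugacy, so that their translation lengths in $\mathcal C_{AL}$ are uniformly bounded, which is the definition of acting elliptically for an isometry of a (hyperbolic) graph.

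\textbf{Periodic braids.} Recall that a periodic $n$-braid is, up to conjugacy, a power of one of the two special braids $\delta=\sigma_1\sigma_2\cdots\sigma_{n-1}$ (with $\delta^n=\Delta^2$) or $\varepsilon=\sigma_1(\sigma_1\sigma_2\cdots\sigma_{n-1})$ (with $\varepsilon^{n-1}=\Delta^2$). First I would observe that conjugation acts by isometries on $\mathcal C_{AL}$, so it suffices to bound the translation length of $\delta^k$ and $\varepsilon^k$ for all $k$. Since both $\delta$ and $\varepsilon$ are simple braids (positive prefixes of $\Delta$, distinct from $1$ and $\Delta$), the vertices $\langle\Delta\rangle$ and $\delta.\langle\Delta\rangle$ are joined by an edge of type (1), and likewise for $\varepsilon$; hence $d_{AL}(1_{\mathcal C_{AL}},\delta.v_0)\leqslant 1$ and similarly for $\varepsilon$, where $v_0$ is the base vertex $\langle\Delta\rangle$. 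But for periodicity we need more: we need the \emph{orbit} of $v_0$ under $\langle\delta\rangle$ (resp.\ $\langle\varepsilon\rangle$) to be bounded. Here I would use the fact that $\delta^n=\Delta^2\in\langle\Delta\rangle$, so $\delta$ acts on $\mathcal C_{AL}$ with the orbit of $v_0$ contained in $\{v_0,\delta.v_0,\ldots,\delta^{n-1}.v_0\}$, a finite set of diameter at most $n-1$ (each step along the orbit is one type-(1) edge). The same argument with $\varepsilon^{n-1}=\Delta^2$ bounds the $\langle\varepsilon\rangle$-orbit by diameter $n-2$. An isometry of a graph with a bounded orbit is elliptic, so every periodic braid acts elliptically.

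\textbf{Reducible braids.} For a reducible braid $z$, the essential point is that its invariant multicurve can be taken, up to conjugacy, to be a union of round curves; more precisely, by \cite{BGN,CalvezStandard,GonzalezMenesesRed} (the ``standardizer'' results already invoked in the proof of Lemma~\ref{L:ReducibleSmallDistance}), $z$ is conjugate to a braid $z'$ having a round reduction curve. Now Lemma~\ref{L:ReducibleSmallDistance} says $z'$ is a product of at most nine absorbable braids, say $z'=a_1a_2\cdots a_9$ with each $a_j$ absorbable. Each multiplication by an absorbable braid moves a vertex of $\mathcal C_{AL}$ by at most one edge of type (2), so $d_{AL}(v_0, z'.v_0)\leqslant 9$. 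As in the periodic case this only bounds a single step, not the whole orbit. To bound the orbit $\{(z')^m.v_0 : m\in\mathbb Z\}$, I would iterate: $(z')^m = (z')\cdots(z')$ has each factor equal to the same product of nine absorbable braids, but we must be careful that the intermediate products need not have infimum $0$, so the bound $d_{AL}(v_0,(z')^m.v_0)\leqslant 9m$ grows. The fix is to note that $z'$ preserves a round multicurve $\mathcal C$, hence so does $(z')^m$ for every $m$, so $(z')^m$ itself (not just a conjugate of it) sends a round curve to a round curve; applying Lemma~\ref{L:ReducibleSmallDistance} directly to $(z')^m$ gives $d_{AL}(v_0,(z')^m.v_0)\leqslant 9$ \emph{uniformly in $m$}. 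Therefore the $\langle z'\rangle$-orbit of $v_0$ has diameter at most $18$, and $z'$ — hence $z$ — acts elliptically.

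\textbf{Main obstacle.} I expect the delicate point to be exactly this last observation: that powers of a round-reducible braid are again round-reducible, so that Lemma~\ref{L:ReducibleSmallDistance} applies to all powers at once and yields a bound on the orbit independent of the exponent. One must check that the conjugation used to make $z$ round-reducible is applied once and for all (it does not change with $m$), and that ``sends a round curve to a round curve'' is a property of $z'$ that is inherited by $(z')^m$ — which holds because $z'$ fixes the isotopy class of the round multicurve up to permuting its components, so $(z')^m$ does too. A secondary, purely bookkeeping obstacle is making sure the definition of ``elliptic'' being used is the orbit-boundedness one (equivalently, bounded translation length), which for isometries of graphs is standard; no hyperbolicity is actually needed for the argument, only for interpreting ``elliptic'' in the Gromov classification. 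Once these points are in place the proposition follows immediately by combining the periodic and reducible cases.
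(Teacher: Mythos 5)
Your proposal is correct and follows essentially the same route as the paper: conjugate a reducible braid to one with a round reduction curve, apply Lemma~\ref{L:ReducibleSmallDistance} to every power (noting that each power again sends a round curve to a round curve) so the whole orbit of the base vertex stays within distance $9$, and treat periodic braids via the triviality of the action of $\Delta^2$. The only difference is cosmetic: the paper does not invoke the classification of periodic braids as conjugates of powers of $\delta$ or $\varepsilon$ --- by definition a periodic braid has a power equal to a power of $\Delta^2$, so it acts as a finite-order isometry and is elliptic directly.
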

\begin{proof}
We recall that a braid is called periodic if it has some power which is also a power of $\Delta^2$. Since $\Delta^2$ acts trivially on the complex, periodic braids act as finite-order isometries on the complex: their action is thus elliptic. 
(Note that $\Delta$ does not act trivially: it sends any vertex $x\Delta^\mathbb Z$ to $\tau^{-1}(x)\Delta^\mathbb Z$.)

If a braid $x$ is reducible with a round reducing curve, then so is any of its powers. As seen in Lemma~\ref{L:ReducibleSmallDistance}, the orbit of the trivial braid under the action of $x$ remains at distance at most~$9$ from the trivial braid. This means that $x$ acts elliptically.

In order to deal with the case of braids which are reducible but without round reduction curves, we remark that such braids are conjugate to reducible braids with round reduction curves, and therefore they act elliptically, too.
\end{proof}


\subsection{$\mathcal C_{AL}(B_n)$ has infinite diameter}\label{SS:InfDiam}

\begin{theorem}\label{T:InfDiam}
Let $B_n$ be the braid group on $n$ strands ($n\geqslant 3$), equipped with the classical Garside structure. Then the complex $\mathcal C_{AL}(B_n)$ has infinite diameter. \end{theorem}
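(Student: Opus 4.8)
The plan is to exhibit, for each $N$, a vertex of $\mathcal{C}_{AL}(B_n)$ at distance at least $N$ from the base vertex $1\cdot\Delta^{\mathbb{Z}}$. The natural strategy is to transport the infinite-diameter statement from the curve complex $\mathcal{CC}(D_n)$: a single edge of $\mathcal{C}_{AL}$ should move the image point in $\mathcal{CC}(D_n)$ by a bounded amount, so a point far away in $\mathcal{CC}(D_n)$ must come from a braid far away in $\mathcal{C}_{AL}$. Concretely, fix the base curve $c_0$ (a round curve), consider the coarse map $\pi\colon \mathcal{C}_{AL}(B_n)\to \mathcal{CC}(D_n)$, $x\Delta^{\mathbb{Z}}\mapsto x.c_0$ (well-defined up to bounded error because $\Delta^2$ is central and $\Delta$ moves $c_0$ a bounded amount). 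I would first prove that $\pi$ is coarsely Lipschitz: if $v,w$ are joined by an edge of $\mathcal{C}_{AL}$, then $d_{\mathcal{CC}}(v.c_0, w.c_0)$ is bounded by a constant depending only on $n$. For edges of the first type this is immediate since simple braids have bounded "complexity" and move any curve a bounded distance in $\mathcal{CC}$; for edges of the second type (absorbable elements $y$) I would need a uniform bound on $d_{\mathcal{CC}}(c_0, y.c_0)$ over all absorbable $y$.

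The main obstacle is precisely this last point: absorbable elements can have arbitrarily large canonical length (Example: the rigid pseudo-Anosov $\sigma_1^2\sigma_2^2\sigma_3^2\sigma_2^2\sigma_1$ in $B_4$, and worse in higher-genus situations), so there is no a priori bound on how far $y.c_0$ is from $c_0$ in $\mathcal{CC}$ coming from word length alone. So I expect the real work is to show that absorbable elements nevertheless move $c_0$ a bounded amount in the curve complex — intuitively, "absorbable" should force $y$ to be, in some coarse sense, reducible or to have small translation length, so that $y.c_0$ stays near $c_0$. I would try to prove a lemma of the form: there is a constant $K=K(n)$ such that every absorbable braid $y$ satisfies $d_{\mathcal{CC}}(c_0, y.c_0)\leqslant K$. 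The proof of such a lemma would likely go through the tube/reducibility analysis already used in Lemma~\ref{L:ReducibleSmallDistance}, or through an argument bounding subsurface projections of $y$: an absorbable element is absorbed by some $x$ with $\inf=0$, $\sup=\ell(y)$, so $xy$ and $x$ have the same normal-form "width," which should constrain $y$ geometrically.

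An alternative, possibly cleaner route that avoids analysing the curve complex directly: build an explicit unbounded quasi-morphism or Lipschitz function on $\mathcal{C}_{AL}(B_n)$. For instance, one could try to use a homogeneous quasi-morphism on $B_n$ that is known to be unbounded on pseudo-Anosovs and to be invariant (up to bounded error) under multiplication by simple elements and by absorbable elements; the Dehornoy floor $\lfloor x\rfloor$, or rather a curve-complex-based quasi-morphism, are candidates. The key property to verify would again be that adding an absorbable element changes the quasi-morphism by a bounded amount — which is morally the same difficulty as above. Either way, I would isolate as the crucial technical statement: \emph{absorbable braids form a bounded-diameter subset when mapped to $\mathcal{CC}(D_n)$ (equivalently, act with uniformly bounded displacement on $c_0$)}, and then the infinite diameter of $\mathcal{C}_{AL}(B_n)$ follows from the infinite diameter of $\mathcal{CC}(D_n)$ together with coarse Lipschitzness of $\pi$. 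I expect the authors' actual argument concentrates almost entirely on that technical statement, probably via a hands-on geometric description (tubes, train tracks, or explicit reducing curves) of what absorbability entails for a braid's action on the punctured disk.
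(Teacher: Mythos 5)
There is a genuine gap, and it sits exactly where you suspect it might. Your whole argument funnels through the ``crucial technical statement'' that absorbable braids have uniformly bounded displacement on $c_0$ in $\mathcal{CC}(D_n)$ (equivalently, that each absorbable braid is a bounded product of braids preserving round curves). But this is precisely the missing half of the paper's Conjecture~\ref{C:QI}: the authors construct the map $\phi\co\MMmodel\to\mathcal C_{AL}(B_n)$ and state explicitly that the only thing left to prove for the quasi-isometry is that ``every absorbable braid is the product of at most~$D$ braids fixing some round curve'' --- and they leave this as an open, merely ``very plausible'' claim. So your reduction replaces the theorem by a statement the paper itself could not prove. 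The suggestions you offer for attacking it (tube/reducibility analysis as in Lemma~\ref{L:ReducibleSmallDistance}, subsurface projections) are headings, not arguments; and your own counter-witness, the rigid pseudo-Anosov absorbable braid $\sigma_1^2\sigma_2^2\sigma_3^2\sigma_2^2\sigma_1$ in $B_4$, shows why none of them is routine: a pseudo-Anosov preserves no curve at all, so bounding its displacement of $c_0$ requires a uniform bound on translation lengths of absorbable pseudo-Anosovs in $\mathcal{CC}$, for which no mechanism is given. The same difficulty blocks your quasi-morphism variant, as you note yourself.

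The paper's actual proof avoids the curve complex entirely. It constructs, for each $n\geqslant 4$, an explicit rigid braid $x=x_n$ whose normal form begins and ends with a single atom and contains a factor of the form $a^{-1}\Delta$ (which is non-absorbable by Example (4)--(5) after Lemma~\ref{L:absorblength}); the same is true of $\partial x$. Rigidity forces strong normal-form stability under left and right multiplication (Propositions~\ref{P:BetweenPowers}--\ref{P:FinalSegment}), which yields Proposition~\ref{P:PathGoesThroughx}: if two vertices have $\lambda_x$-values differing by at least $3$, the preferred path between them contains a long segment reading off powers of $x$; by Lemma~\ref{L:subword} the label of that path (or of its inverse) cannot be absorbable, so such vertices cannot be adjacent. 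Hence each edge increases $\lambda_x$ by at most $2$, giving $d_{AL}(1,x^N\Delta^{\Z})\geqslant N/2$. This is a self-contained Garside-theoretic certificate of unboundedness (indeed of a loxodromic element), whereas your route would, if completed, prove the much stronger quasi-isometry conjecture. As it stands, the key lemma you need is unproven and appears to be genuinely hard.
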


\begin{proof} For $n=3$, this is the statement in Example \ref{E:InfDiam} (2). Our strategy for proving Theorem \ref{T:InfDiam} is to actually construct elements whose action on the additional length complex is loxodromic. 
For every braid index $n\geqslant 4$, we will construct a special braid $x_n$ of infimum~0 such that the vertex $x_n^{N}\Delta^{\Z}$ ($N\in \mathbb N$) is at a distance at least $\frac{N}{2}$ from the identity vertex of~$\mathcal C_{AL}$. (As an aside, we conjecture that the action of $x_n$ on $\mathcal C_{AL}(B_n)$ is weakly properly discontinuous, \cite{BestvinaFujiwara02, PrzSisto}.)

We start with the construction of our special braids $x_n$. The rough idea is that $x_n$~should contain something like the ``blocking braids'' of~\cite{CarusoWiestGeneric2} in order to give~$x_n$ a very strong rigidity property (see Propositions~\ref{P:BetweenPowers} and \ref{P:InitialSegment}), but it should also contain pieces which prevent both $x_n$ and $\partial x_n$ from being absorbable. Here are the details.

Recall the \emph{shift} morphism $\text{sh}$ from $B_{\infty}$ to $B_{\infty}$ given by $\sigma_i\mapsto \sigma_{i+1}$ and the reverse antiautomorphism $\text{rev}$, $\sigma_{i_1}\sigma_{i_2}\ldots\sigma_{i_l} \mapsto \sigma_{i_l}\ldots \sigma_{i_2}\sigma_{i_1}$; 
for fixed $n$ we also note $\tau_n$ the conjugation by $\Delta_n$ inside $B_n$: $x\mapsto \Delta_n^{-1}x\Delta_n$.\\

Let us first define the 4-strand braid 
$$x_4=\sigma_2\cdot\sigma_2\sigma_1\sigma_3\cdot\sigma_1\sigma_3\sigma_2\sigma_1\sigma_3\cdot\sigma_1\sigma_3\sigma_2\sigma_1\sigma_3\cdot\sigma_1\sigma_3\sigma_2\cdot\sigma_2.$$

Then for $n\geqslant 5$, we define $u_n,x_n\in B_n$ as follows:

$$u_n=\text{sh}\left(\sigma_{\lfloor\frac{n-2}{2}\rfloor}^{-1}\Delta_{n-2}\right)\left(\sigma_{1}\ldots\sigma_{\lfloor\frac{n-1}{2}\rfloor}\right)\left(\sigma_{n-1}\ldots\sigma_{\lfloor\frac{n+3}{2}\rfloor}\right),$$

where $\lfloor \cdot\rfloor$ stands for the integer part, and 

\begin{multline*}
x_n=\text{sh}^{\lfloor\frac{n-3}{2}\rfloor}\left(\sigma_2\cdot\sigma_2\sigma_1\sigma_3\right)\cdot 
\prod^n_{\mathclap{\substack{k=5\\
                   k\equiv n \pmod 2\\
                   }}} \left( \text{sh}^{\lfloor\frac{n-k}{2}\rfloor}\left(\tau_{k}^{\lfloor\frac{k+1}{2}\rfloor}\left(u_{k}\right)\right)\right)
\cdot\\
\tau_n^{\lfloor\frac{n+1}{2}\rfloor}\left(\sigma_{\lfloor\frac{n+1}{2}\rfloor}^{-1}\Delta_n\right)\cdot \tau_n^{\lfloor\frac{n+1}{2}\rfloor}\left(\sigma_{\lfloor\frac{n}{2}\rfloor}^{-1}\Delta_n\right)\cdot\\
\text{rev}\left(\text{sh}^{\lfloor\frac{n-3}{2}\rfloor}\left(\sigma_2\cdot\sigma_2\sigma_1\sigma_3\right)\cdot \prod^n_{\mathclap{\substack{k=5\\
                   k\equiv n \pmod 2\\
                   }}}  \text{sh}^{\lfloor\frac{n-k}{2}\rfloor}\left(\tau_{k}^{\lfloor\frac{k+1}{2}\rfloor}\left(u_{k}\right)\right)\right).
                   \end{multline*}

The braids $x_9$ and $x_{10}$ are depicted in Figure~\ref{F:bloquante}. 
Note that for each $n\geqslant 4$, we have $\inf(x_n)=0$ and $\ell(x_n)=\sup(x_n)=2\cdot\lfloor \frac{n+1}{2}\rfloor+2$. 

\begin{figure}[ht]
\centerline{\includegraphics[height=11.65cm]{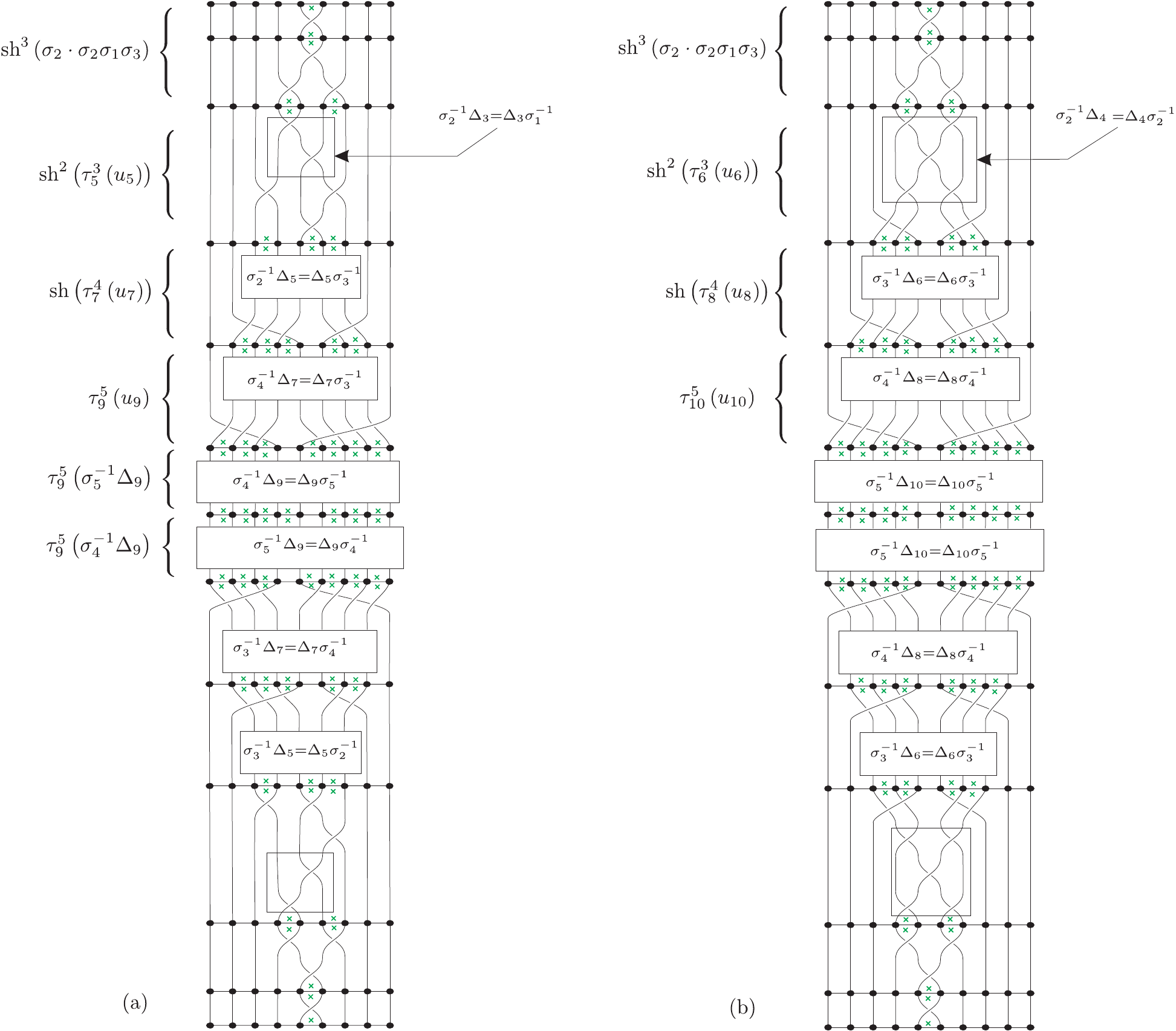}}
\caption{(a) The braid $x_{9}$. (b) The braid $x_{10}$. Both are shown in left (and right) normal form; at the beginning and the end of each of the factors, green crosses indicate which pair of adjacent strands cross in the considered factor.}
\label{F:bloquante}
\end{figure}

\begin{observation}\label{O:PropertiesOfx}
For each $n\geqslant 4$, the braid $x_n$ has the following properties:
\begin{enumerate}
\item The left and right normal forms of~$x_n$ are the same, 
\item The first factor $s_{\rm first}$ and the last factor $s_{\rm last}$ of the left and right normal form of~$x_n$ are the same and consist of a single atom: $s_{\rm first}=s_{\rm last}=\sigma_{\lfloor\frac{n+1}{2}\rfloor}$; in particular the pair $(s_{\rm last},s_{\rm first})$ is both right and left-weighted.
\item The left (and right) normal form of~$x_n$ contains a factor of the form $a^{-1}\Delta$, where $a$ is an atom. Specifically, $x_n$ contains the factor $\tau_n^{\lfloor\frac{n+1}{2}\rfloor}\left(\sigma_{\lfloor\frac{n+1}{2}\rfloor}^{-1}\Delta_n\right)$.

\end{enumerate}
\end{observation}

\begin{remark}
\begin{itemize}
\item Properties (1), (2), and (3) of $x_n$ above are the only ones used in the proof that $\mathcal C_{AL}(B_n)$ has infinite diameter.
\item Property (2) implies in particular that $x_n$ is rigid (see Section \ref{SS:Absorbable}).
\item $x_n$ is not absorbable, and neither is $\partial x_n$.  Indeed, Property (3), together with Lemma~\ref{L:subword}, implies that $x_n$ is not absorbable, as it contains a factor which by Example~\ref{I:sInvDelta} (5) is not absorbable. Similarly, the first and last factors of $x_n$ contain only one atom (Property (2)), so the corresponding factors of $\partial x_n$ are of the form  $\sigma_i^{-1}\Delta$; in particular, $\partial x_n$ is not absorbable, either.  
\item Another possible definition of $x_n$ would have been $x_n'=\mathrm{rev}(\alpha)\cdot \sigma_{n-1}^{-1}\Delta\cdot \Delta\sigma_{n-1}^{-1}\cdot \alpha$, where $\alpha$ is the ``blocking braid'' from~\cite{CarusoWiestGeneric2}. This braid~$x'_n$ is longer than the one presented above.
\end{itemize}
\end{remark}

From now on, we fix an arbitrary braid index $n\geqslant 4$ and we write $x=x_n$; we write $r$ for the number of factors in the normal form of $x$. Thus $r=2\cdot\lfloor \frac{n+1}{2}\rfloor+2$.

\begin{lemma}\label{L:PropertyOfx}
Suppose $v$ is a nontrivial positive suffix of~$x$, and $m$ a non-negative integer.

(a) The product $v\cdot x^m$ is in left normal form as written, i.e.\ the left normal form of the product is just the juxtaposition of the respective left normal forms of $v$ and of~$x^m$.

(b) For every nontrivial positive prefix $t$ of $vx^m$, we have $v\wedge t\neq 1$.  
\end{lemma}
\begin{proof}
As the last factor of the \emph{right} normal form of $x$ is $\sigma_{\lfloor\frac{n+1}{2}\rfloor}$ (Observation \ref{O:PropertiesOfx} (1-2)), the only simple suffix of $x$ is $\sigma_{\lfloor\frac{n+1}{2}\rfloor}$; in particular the last factor of the \emph{left} normal form of $v$ is $\sigma_{\lfloor\frac{n+1}{2}\rfloor}$. 
But this is also the first factor of the left normal form of $x^m$; because the pair $(\sigma_{\lfloor\frac{n+1}{2}\rfloor},\sigma_{\lfloor\frac{n+1}{2}\rfloor})$ is left-weighted, we obtain that the product $v\cdot x^m$ is in left normal form as written, proving~(a). In particular, $ \Delta\wedge vx^m=\Delta\wedge v$.

Now let $\sigma$ be a letter which divides $t$. In order to prove (b), it is sufficient to show that $\sigma\preccurlyeq v$. But we have: 
$\sigma\preccurlyeq \Delta\wedge t\preccurlyeq \Delta\wedge vx^m
=\Delta\wedge v\preccurlyeq v$. 
\end{proof}

The lemma allows to show that every prefix of some positive power of $x$ lies exactly between two successive powers of $x$ with respect to the prefix order. We first introduce some notation. 
\begin{notation}
For any braid $z$ with infimum 0 we define the non-negative integer $\lambda_x(z)=\max\{k\in \mathbb Z, \ x^k\preccurlyeq z\}$.
\end{notation} 

\begin{proposition}\label{P:BetweenPowers}
Let $z$ be a positive braid with infimum 0; let $\lambda=\lambda_x(z)$. 
Then the following are equivalent:
\begin{itemize}
\item[(a)] there exists a positive integer $m$ such that $z\preccurlyeq x^m$,
\item[(b)] $x^{\lambda}\preccurlyeq z\preccurlyeq x^{\lambda+1}$.
\end{itemize}
In this case, the product of the $\lambda r$ first factors of the left normal form of $z$ is exactly~$x^{\lambda}$, that is $\Delta^{\lambda r}\wedge z=x^{\lambda}$.
\end{proposition}

\begin{proof}
The direction (b) $\Longrightarrow$ (a) is obvious. To show the converse, we 
need to show that $z\preccurlyeq x^{\lambda+1}$. We may assume that $z$ is not a power of $x$.
Consider the braid $d=z\wedge x^{\lambda+1}$; by definition there exist positive braids $t$ and $v\neq 1$ such that 
$z=dt$, $x^{\lambda+1}=dv$ and $t\wedge v=1$. Note that $x^\lambda$ is a prefix of $d$, so that $v$ is a suffix of $x$. 
Now since $z=d t$ is a prefix of $x^m = d v x^{m-\lambda-1}$, we deduce that $t$ is a prefix of $v x^{m-\lambda-1}$. If $t$ is non-trivial, then we obtain by Lemma \ref{L:PropertyOfx} (b) that $t\wedge v\neq 1$, which is absurd. Thus $t=1$, which means that $z=z\wedge x^{\lambda+1}$, as we wanted to prove.

The second part of the statement follows from the calculation: 
$$x^\lambda=x^\lambda\wedge \Delta^{\lambda r}\preccurlyeq z\wedge \Delta^{\lambda r}\preccurlyeq x^{\lambda+1}\wedge \Delta^{\lambda r}=x^\lambda.$$  
where the last equality is due to the rigidity of~$x$.
\end{proof}

We now see that even without the hypothesis that $z$ is a prefix of some power of $x$, provided that $\lambda_x(z)$ is big enough, there is an initial segment of the left normal form of $z$ which consists of a power of $x$. 
\begin{proposition}\label{P:InitialSegment}
Let $z$ be a braid of infimum 0 and suppose that $\lambda=\lambda_x(z)\geqslant 2$.
Then the product of the $(\lambda-1)r$ first factors of the left normal form of $z$ is exactly  $x^{\lambda-1}$.
\end{proposition}

\begin{proof}
We may assume that $x^\lambda\neq z$, otherwise the result is trivial. So there exists a non-trivial positive $A$ so that $z=x^\lambda A$. Write $s_1\ldots s_r$ for the normal form of $x$. Let $1<j<r$ be the biggest integer so that $s_j$ has the form $\sigma_i^{-1}\Delta$ (see Observation~\ref{O:PropertiesOfx}(3)).
 From the algorithm for computing left normal forms -- see \cite{Gebhardt-GM}, Proposition~1 --, and because $x$ is rigid, it follows that the left normal form of $x^\lambda A$ starts with 
$x^{\lambda-1}s_1\ldots s_j$; otherwise $\inf(z)=0$ would be contradicted. 
\end{proof} 

Propositions \ref{P:BetweenPowers} and \ref{P:InitialSegment} admit analogues "on the right", namely if $z$ is a suffix of some positive power of $x$, then $z$ lies between two successive powers of $x$ with respect to the suffix order. Moreover, if $k\geqslant 2$ is the maximal integer so that $x^{k}$ is a suffix of $z$, then the left normal form of $z$ has a final segment consisting of $x^{k-1}$. However, these facts will not be used in the proof of Theorem \ref{T:InfDiam} so we do not prove them. Instead, we state the easier:

\begin{proposition}\label{P:FinalSegment}
Let $z$ be a positive braid of infimum 0 and assume that there is a positive integer $k$ so that $x^{k+1}\succcurlyeq z\succcurlyeq x^k$. Then the final segment of length $kr$ in the left normal form of $z$ consists of $x^k$.
\end{proposition}

\begin{proof}
Again, we may assume that $z$ is not a power of $x$. There exist by hypothesis some non-trivial positive braids $v$ and $w$ so that $x^{k+1}=wz$ and $z=vx^k$. Combining both, we get $x^{k+1}
=wvx^k$; cancelling $x^k$ on the right, it follows that $x=wv$, so that $v$ is a suffix of $x$. By Lemma \ref{L:PropertyOfx}(a), 
$z=vx^k$ is in left normal form as written. This shows the result.
\end{proof}
 
\begin{proposition}\label{P:PathGoesThroughx}
Suppose that $z_1,z_2$ are braids with infimum 0; let $v_i$ ($i=1,2$) be the vertex of $\mathcal C_{AL}$ whose distinguished representative is $z_i$. 
Let $\lambda_1=\lambda_x(z_1)$ and $\lambda_2=\lambda_x(z_2)$. Assume that $\lambda_2-\lambda_1\geqslant 3$. Then the path $A(v_1,v_2)$ contains $A(x^{\lambda_1+1}\Delta^{\Z},x^{\lambda_2-1}\Delta^{\Z})$. 
\end{proposition}
\begin{proof} See Figure~\ref{F:PfLemma}.
We look at the two paths $\gamma_1=A(v_1,x^{\lambda_2-1}\Delta^{\Z})$ and $\gamma_2=A(1,v_2)$. 

We claim that $\gamma_1$ and~$\gamma_2$ coincide along $A(x^{\lambda_1+1}\Delta^{\Z},x^{\lambda_2-1}\Delta^{\Z})$. 

Let us prove this claim.
On the one hand, by Proposition \ref{P:InitialSegment}, $A(1,x^{\lambda_2-1})$ is an initial segment of $\gamma_2$. 

On the other hand, let $z_3=z_1\wedge x^{\lambda_2-1}$ and let $v_3$ be the vertex of $\mathcal C_{AL}$ whose distinguished representative is~$z_3$.
By Lemma~\ref{L:pgcd}, $\gamma_1$ is the concatenation of $A(v_1,v_3)$ and $A(v_3,x^{\lambda_2-1}\Delta^{\mathbb Z})$. Note that $\lambda_x(z_3)=\lambda_1$. By Proposition~\ref{P:BetweenPowers}, 
$x^{\lambda_1}\preccurlyeq z_3\preccurlyeq x^{\lambda_1+1}$. It follows that 
$$x^{\lambda_2-\lambda_1-1}\succcurlyeq z_3^{-1}x^{\lambda_2-1} \succcurlyeq x^{\lambda_2-\lambda_1-2}.$$
By Proposition \ref{P:FinalSegment}, the left normal form of $z_3^{-1}x^{\lambda_2-1}$ terminates with $(\lambda_2-\lambda_1-2)r$ factors whose product is exactly $x^{\lambda_2-\lambda_1-2}$. In other words, $A(v_3,x^{\lambda_2-1}\Delta^{\mathbb Z})$ has a final segment equal to $A(x^{\lambda_1+1}\Delta^{\mathbb Z},x^{\lambda_2-1}\Delta^{\mathbb Z})$ and our claim is shown.

\begin{figure}[ht]
\centerline{\includegraphics{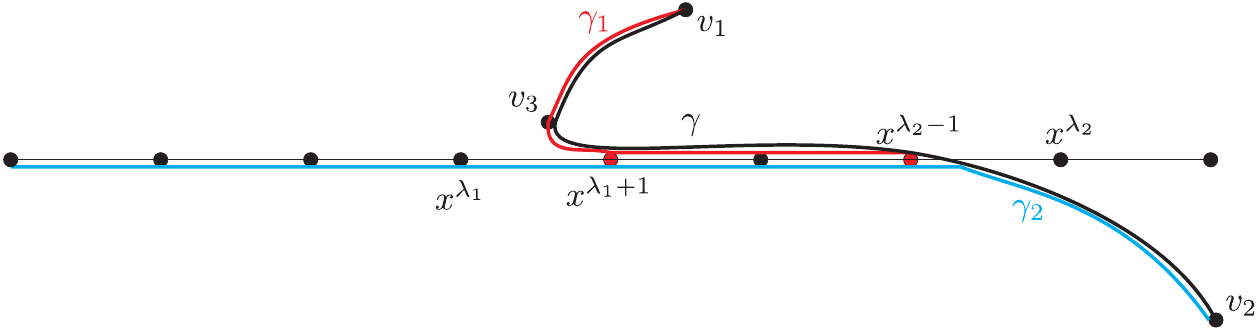}}
\caption{Proof of Proposition~\ref{P:PathGoesThroughx}.}
\label{F:PfLemma}
\end{figure}

Now consider the path $\gamma$ formed by the subpath of $\gamma_1$ between $v_1$ and $x^{\lambda_2-1}\Delta^{\Z}$, followed by the subpath of $\gamma_2$ between $x^{\lambda_2-1}\Delta^{\Z}$ and $v_2$. Observe that $\gamma$ connects $v_1$ and $v_2$ and that the product of the labels of the successive edges along $\gamma$ gives a left normal form. This says that $\gamma=A(v_1,v_2)$, hence showing the lemma. 
\end{proof}
%

We are now ready to complete the proof of Theorem~\ref{T:InfDiam}. We will do this by proving the following claim.

{\bf Claim } For all positive integers~$N$,  the $N$th power of $x$ lies at distance at least $\frac{N}{2}$ from the identity vertex in~$\mathcal C_{AL}$.

%

In order to prove this claim, we fix an $N$, and suppose for a contradiction that there exists a path of length $K$, with $K<\frac{N}{2}$, which connects the identity vertex with the vertex $x^N\Delta^{\Z}$ . Let $v_0=1, v_1,\ldots ,v_K=x^N\Delta^{\mathbb Z}$ be the vertices along this path. Notice that $\lambda_x(\underline{v_0})=0$ and $\lambda_x(\underline{v_K})=N$. Thus there is some integer~$i$ between 0 and $K-1$ such that $\lambda_x(\underline{v_{i+1}})\geqslant \lambda_x(\underline{v_i})+3$. This index~$i$ will play a key role in what follows.

%

By Proposition~\ref{P:PathGoesThroughx}, 
the path $A(v_i,v_{i+1})$ contains the subpath $$A(x^{\lambda_x(\underline{v_i})+1}\Delta^{\mathbb Z},x^{\lambda_x(\underline{v_{i+1}})-1}\Delta^{\Z}).$$
We will see that this contradicts the equality $d_{AL}(v_i,v_{i+1})=1$. The vertices $v_i$ and $v_{i+1}$ cannot be connected by an edge labeled by a simple element, otherwise $A(v_i,v_{i+1})$ would be a path of length 1 which cannot contain $A(x^{\lambda_x(\underline{v_i})+1}\Delta^{\mathbb Z},x^{\lambda_x(\underline{v_{i+1}})-1}\Delta^{\Z})$ as a subpath. Thus it only remains to see that there cannot be any absorbable element~$y$ so that $\underline{v_i} y\in \underline{v_{i+1}}\Delta^{\mathbb Z}$. 
Actually, such a braid~$y$ must have the form $y=\underline {v_i}^{-1}\underline{v_{i+1}}\Delta^k$ for some integer $k$. 
In order for $y$ to be absorbable, we must have $k=k_1=-\inf(\underline {v_i}^{-1}\underline{v_{i+1}})$  or $k=k_2=-\sup(\underline {v_i}^{-1}\underline{v_{i+1}})$. 
In the first case, $y$ is the braid whose left normal form is given reading the edges along $A(v_i,v_{i+1})$ and thus cannot be absorbable, by Lemma~\ref{L:subword}. 
In the second case, $y$ is negative; its inverse $y^{-1}$ is a positive braid whose left normal form is obtained reading the edges along the path $A(v_{i+1},v_i)$. Again, this braid cannot be absorbable because its left normal form contains $\partial x$ as a subword. This contradicts the choice that $d_{AL}(v_i,v_{i+1})=1$, completing the proof of the claim and of Theorem~\ref{T:InfDiam}.
\end{proof}


\subsection{Quasi-isometry with the curve complex}\label{SS:QiWithCC}

To conclude this paper, we turn to the question whether the additional length complex $\mathcal C_{AL}(B_n)$ is actually quasi-isometric to $\mathcal{CC}$, the curve complex of the $n$-times punctured disk. If the answer to this question is positive, as we conjecture, then our previous results show in particular that Garside normal forms in~$B_n$ project to unparametrized quasi-geodesics in $\mathcal{CC}$.

We shall construct a Lipschitz map
$$
\mathcal{CC} \longrightarrow \mathcal{C}_{AL}(B_n)
$$
The most natural way to think of this map is to introduce first another model for the curve complex. Start with the Cayley graph of~$B_n$, with respect to any finite generating set, for instance Garside's. Next we recall that there are only finitely many \emph{round} simple closed curves in~$D_n$, the disk with $n$ punctures lined up horizontally. For each such curve~$c$, look at the set $S_c\subset B_n$ consisting of all braids which stabilise~$c$, and build a cone on the subset $S_c$ of the Cayley graph, i.e.\ introduce a new vertex and connect each element of~$S_c$ by an edge of length one to this new vertex. Also build copies of these finitely many cones all over the Cayley graph by translating them using the left action of $B_n$ on the Cayley graph. Let us denote the resulting space $\MMmodel$; this is sometimes called the ``electric space". 

As proven by Masur and Minsky~\cite[Lemma 3.2]{MM1}, the space $\MMmodel$ is quasi-isometric to the curve complex of $B_n$; a quasi-isometry $\MMmodel\to \mathcal{CC}$ is given by sending the vertex of the Cayley graph corresponding to $x\in B_n$ to the curve $x.c_0$, where $c_0$ is any simple closed curve in $D_n$ (e.g.\ a round one). 

Now we can construct a very nice map $\phi\co \MMmodel \to \mathcal C_{AL}$. It suffices to map vertices and edges of $\MMmodel$ belonging to the Cayley graph by the identity map. Every cone vertex is mapped in the same way as an arbitrarily chosen one of its adjacent vertices. Finally, every cone edge can be sent to an arbitrarily chosen edge path in $\mathcal C_{AL}$ of length at most nine (this is possible by Lemma~\ref{L:ReducibleSmallDistance}).

\begin{conjecture}\label{C:QI} 
The map $\phi\co \MMmodel \to \mathcal C_{AL}(B_n)$ is a quasi-isometry.
\end{conjecture}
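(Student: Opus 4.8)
The plan is as follows; we do not complete it, but we believe it is the right route, and it pinpoints the one missing ingredient. \emph{The easy direction.} By construction $\phi$ sends every edge of the Cayley graph to a path of length at most~$1$ in $\mathcal C_{AL}$ and, as recalled in the definition of~$\phi$, every cone edge to a path of length at most~$9$ (Lemma~\ref{L:ReducibleSmallDistance}); hence $\phi$ is $9$-Lipschitz. It is also surjective on vertices, since the vertex $\underline v\Delta^{\Z}$ of $\mathcal C_{AL}$ is the image of the Cayley vertex $\underline v$ of $\MMmodel$. So the upper inequality in the definition of a quasi-isometry holds for free, and the whole problem is to show that $\phi$ does not contract distances.

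\emph{A candidate coarse inverse.} Define $\psi\co\mathcal C_{AL}\to\MMmodel$ on vertices by $\psi(\underline v\Delta^{\Z})=\underline v$, regarded as a vertex of the Cayley graph inside $\MMmodel$, and extend it arbitrarily over edges. Then $\phi\circ\psi$ is the identity on vertices, while $\psi\circ\phi$ sends a Cayley vertex~$g$ to $g\Delta^{-\inf(g)}$. An edge of $\mathcal C_{AL}$ of the first type joins $\underline v\Delta^{\Z}$ to $(\underline v m)\Delta^{\Z}$ for a simple element~$m$, and an edge of the second type joins it to $(\underline v y)\Delta^{\Z}$ for an absorbable element~$y$. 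Applying $\psi$ and the triangle inequality, one sees that $\psi$ is coarsely Lipschitz, and that $\psi\circ\phi$ is at bounded distance from the identity, as soon as:
(a)~$d_{\MMmodel}(1,\Delta^{k})$ is bounded independently of $k\in\Z$; and
(b)~$d_{\MMmodel}(1,y)$ is bounded independently of the absorbable element $y\in B_n$.
Since two coarsely Lipschitz maps that are coarse inverses of one another are quasi-isometries, this would finish the proof.

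\emph{Point (a), and the real obstacle (b).} Point~(a) is soft: the Masur--Minsky quasi-isometry $\MMmodel\to\mathcal{CC}(D_n)$, $x\mapsto x.c_0$, is $B_n$-equivariant, and the central element $\Delta^{2}$ acts trivially on $\mathcal{CC}(D_n)$; hence the $\langle\Delta^{2}\rangle$-orbit of $c_0$ is a single point, pulling this back along the quasi-isometry bounds the $\MMmodel$-diameter of $\langle\Delta^{2}\rangle$, and as $\Delta$ is one of the generators and the $\MMmodel$-metric is $B_n$-invariant, (a)~follows. Point~(b) is the heart of the matter, and is in fact \emph{equivalent} to the conjecture: if some absorbable braids $y_k$ had $d_{\MMmodel}(1,y_k)\to\infty$, then, since $d_{AL}(\Delta^{\Z},y_k\Delta^{\Z})\leqslant 1$ for every~$k$ (a second-type edge), $\phi$ would fail to be a quasi-isometric embedding. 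Thus the conjecture amounts precisely to the assertion that absorbable elements move the basepoint a uniformly bounded amount, i.e.\ that $\{\,y.c_0 : y\ \text{absorbable}\,\}$ has finite diameter in $\mathcal{CC}(D_n)$.

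\emph{Why (b) is hard.} The naive hope --- that an absorbable element is reducible, hence fixes a curve --- is false: the pseudo-Anosov, rigid, absorbable $4$-braid exhibited in Section~\ref{SS:Absorbable} already breaks that reasoning. (It is a single braid, so it does not itself contradict~(b), but it shows that a proof of~(b) must control absorbable pseudo-Anosov braids if arbitrarily long such braids exist.) Two plausible attacks: either bound the canonical length of absorbable braids with empty canonical reduction system --- so that a long enough absorbable braid must carry some round curve to a round curve, and Lemma~\ref{L:ReducibleSmallDistance} applies in the electric space --- or analyse the condition ``multiplying by~$y$ adds no canonical length'' directly through subsurface projections, showing that it keeps every subsurface projection of $c_0$ and of $y.c_0$ bounded, and then invoking the Masur--Minsky distance formula to get $d_{\mathcal{CC}}(c_0,y.c_0)\leqslant C$. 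We expect this last step to be the principal difficulty, largely because $\mathcal C_{AL}$ has, as yet, no internal analogue of subsurface projections to exploit.
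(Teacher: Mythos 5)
This statement is a \emph{conjecture}, and the paper does not prove it either: its Section~\ref{SS:QiWithCC} only observes that $\phi$ is coarsely Lipschitz and surjective, and then states that ``all that remains'' is to show $\phi$ does not shrink distances, reducing this to the claim that any $x$ with $d_{AL}(1,x)=1$ satisfies $d_{\MMmodel}(1,x)\leqslant D$ --- equivalently, that absorbable braids stay a bounded distance from the identity in the electric space. Your reduction is the same one: your point~(a) is correct (and is the easy, implicit part the paper does not bother to spell out), and your point~(b) is precisely the open claim the paper isolates, which you correctly identify as equivalent to the conjecture; your observation that the rigid pseudo-Anosov absorbable $4$-braid of Section~\ref{SS:Absorbable} blocks the naive ``absorbable $\Rightarrow$ reducible'' argument is a genuine and accurate caveat. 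In short, your proposal matches the paper's own (incomplete) treatment, honestly flags the same missing ingredient, and proves no more and no less than the paper does.
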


All that remains to be proven is that the map~$\phi$ does not shrink distances too much. More precisely, it suffices to prove that there exists a positive number $D$ with the following property: if $x\in B_n$ is such that $d_{AL}(1_G,x)=1$, then $d_{\MMmodel}(1_G, x)\leqslant D$. This comes down to the very plausible claim that every absorbable braid is the product of at most~$D$ braids fixing some round curve.

{\bf{Acknowledgements.}} 
Support by ANR LAM (ANR-10-JCJC-0110) is gratefully acknowledged.  
The first author is supported by the ``initiation to research'' project no.11140090 from Fondecyt, by MTM2010-19355 and FEDER, and Fondecyt Anillo 1103.

\end{document}